\newtheorem{theorem}{Theorem}[section]
\newtheorem{lemma}[theorem]{Lemma}
\newtheorem{definition}[theorem]{Definition}
\theoremstyle{remark}
\theoremstyle{definition}
\newtheorem{example}{Example}[section]
\newcommand{\R}{\mathbb{R}}
\newcommand{\bO}{\mathcal{O}}
\newcommand{\Af}{\mathcal{A}}
\newcommand{\Nf}{\mathcal{N}}
\newcommand{\Sf}{\mathcal{S}}
\newcommand{\bq}{\begin{equation}}
\newcommand{\eq}{\end{equation}}
\newcommand{\Norm}[1]{\left\Vert#1\right\Vert}
\newcommand{\abs}[1]{\vert#1\vert}
\newcommand{\Abs}[1]{\left\vert#1\right\vert}
\DeclareMathOperator{\argmin}{argmin}
\DeclarePairedDelimiter\floor{\lfloor}{\rfloor}
\newcommand{\boundary}{\partial \Omega}
\newcommand{\widthtwofigures}{0.48\textwidth}
\newcommand{\twocolumnsfigures}[2]{
\begin{figure}
\centering
\begin{minipage}{\widthtwofigures}\centering
#1
\end{minipage}
\hfill
\begin{minipage}{\widthtwofigures}\centering
#2
\end{minipage}
\end{figure}
}
\newcommand{\Dt}{\mathcal{D}}
\newcommand{\G}{\mathcal{G}}
\begin{document}

\title[Higher-Order Adaptive Methods for Nonlinear Elliptic Equations]{Higher-order Adaptive Finite Difference Methods for Fully Nonlinear Elliptic Equations}

\author{Brittany D. Froese}
\thanks{The first author was partially supported by NSF DMS-1619807.  The second author was partially supported by NSERC Discovery grant RGPIN-2016-03922 and by Funda\c{c}\~ao para a Ci\^encia e Tecnologia (FCT) doctoral grant (SFRH/BD/84041/2012).}
\address{Department of Mathematical Sciences, New Jersey Institute of Technology, University Heights, Newark, New Jersey 07102, USA ({\tt bdfroese@njit.edu})}

\author{Tiago Salvador}
\address{Department of Mathematics and Statistics, McGill University, 805 Sherbrooke Street West, Montreal, Quebec, H3A 0G4, Canada ({\tt tiago.saldanhasalvador@mail.mcgill.ca}).}

\date{\today}

\begin{abstract}
We introduce generalised finite difference methods for solving fully nonlinear elliptic partial differential equations.  Methods are based on piecewise Cartesian meshes augmented by additional points along the boundary.  This allows for adaptive meshes and complicated geometries, while still ensuring consistency, monotonicity, and convergence.  We describe an algorithm for efficiently computing the non-traditional finite difference stencils.  We also present a strategy for computing formally higher-order convergent methods.  Computational examples demonstrate the efficiency, accuracy, and flexibility of the methods.
\end{abstract}

\subjclass[2010]{35J15, 35J60, 35J96, 65N06, 65N12, 65N50}

\keywords{finite difference methods, fully nonlinear elliptic partial differential equations, adaptive methods, higher-order methods}

\maketitle

\setcounter{tocdepth}{1}


\section{Introduction}\label{sec:intro}

In this article we design generalised finite difference methods for a large class of fully nonlinear degenerate elliptic partial differential equations (PDEs).  The approximation schemes are almost-monotone, which allows us to exploit the Barles-Souganidis convergence framework.  A key feature of these methods is the use of piecewise Cartesian grids, augmented by additional discretisation points along the boundary.  Because of the underlying structure of the grids, the methods we design overcome several hurdles that exist for current numerical methods.  1) The non-standard finite difference stencils can be constructed efficiently.  2) The monotone approximation schemes preserve consistency near the boundary.  3) Higher-order schemes are easily designed.  4) Complicated geometries are easily handled.

\subsection{Background}
Fully nonlinear elliptic partial differential equations (PDEs) arise in numerous applications including reflector/refractor design~\cite{GlimmOlikerReflectorDesign}, meteorology~\cite{Cullen}, differential geometry~\cite{Caf_MAGeom}, astrophysics~\cite{FrischUniv}, seismology~\cite{EFWass},  mesh generation~\cite{Budd}, computer graphics~\cite{Osher_book}, and mathematical finance~\cite{FlemingSoner}.  Realistic applications often involve complicated domains and highly non-smooth data.  Thus the development of robust numerical methods that are compatible with adaptive mesh refinement is a priority.

In recent years, the numerical solution of these equations has received a great deal of attention, and several new methods have been developed including finite difference methods~\cite{BFO_MA,FinnGrid,Loeper,Saumier,SulmanWilliamsRussell}, finite element methods~\cite{Awanou,Bohmer,BrennerNeilanMA2D,Smears}, least squares methods~\cite{DGnum2006}, and methods involving fourth-order regularisation terms~\cite{FengNeilan}.  However, these methods are not designed to compute weak solutions. When the ellipticity of the equation is degenerate or no smooth solution exists, methods become very slow, are unstable, or converge to an incorrect solution.

Using a framework developed by Barles and Souganidis~\cite{BSnum}, provably convergent (monotone) methods have recently been constructed for several fully nonlinear equations using wide finite difference stencils~\cite{ObermanWS}.  Recently, the idea of wide stencil schemes has been adapted to produce monotone approximations for a large class of fully nonlinear elliptic operators posed on very general meshes or points clouds~\cite{FroeseMeshfree}.  However, constructing the necessary finite difference stencils is a very expensive process, and the resulting approximations have only $\bO(\sqrt{h})$ accuracy.

\subsection{Contributions of this work}
The goal of this article is to design higher-order, adaptive, convergent finite difference methods for the solution of the degenerate elliptic PDE
\bq\label{eq:PDE}
F(x,u(x),D^2u(x)) \equiv \max\limits_{\theta\in[0,2\pi)}F_\theta(x,u(x), u_{\theta\theta}(x)) = 0,
\eq
where $u_{\theta\theta}$ denotes the second directional derivative of $u$ in the direction $e_\theta = (\cos\theta,\sin\theta) \in \R^2$.  This includes a wide range of PDE operators including monotone functions of the eigenvalues of the Hessian matrix $D^2u$ and general convex functions of the Hessian matrix~\cite[Proposition~5.3]{Evans_NonlinearElliptic}.  We note that the maxima can also be taken over a subset of directions by simply setting $F_\theta=-1$ for directions that are not active in the PDE operator.  The methods described in this article can also be trivially adapted to include minima and other monotone functions of the operators $F_\theta$.

We remark that the PDE operator can also include Lipschitz continuous dependence on the gradient $\nabla u$.  Monotone approximation of first-order operators is fairly well-established and does not require the wide-stencil structure that is necessary to correctly discretise second-order operators.  In this article, we omit terms involving the gradient for the sake of compactness and clarity.

We take as a starting point the meshfree finite difference approximations developed in~\cite{FroeseMeshfree}.  Given a set of discretisation points $\G$, that work introduced a generalised finite difference approximation of~\eqref{eq:PDE} of the form
\bq\label{eq:approxMeshfree} \max\limits_{\theta\in{\Af}\subset[0,2\pi)}F_\theta\left(x_i, u(x_i), \sum\limits_{j\in\Nf(i,\theta)}a_{i,j,\theta}(u(x_i)-u(x_j))\right) = 0, \quad x_i \in \G. \eq
Above, the set of neighbours $\Nf(i,\theta)$ gives indices of several points that are near $x_i$ and align closely with the $e_\theta$ direction.  In the original paper, these sets of neighbours are computed through brute force, by finding and inspecting all nodes lying within a distance $\sqrt{h}$ of each other.

In this article, we describe the construction of piecewise Cartesian meshes using a quadtree structure, which is augmented by additional discretisation points along the boundary in order to preserve consistency.  We demonstrate that the resulting set of discretisation points satisfies the conditions required by~\cite[Theorem~13]{FroeseMeshfree}.  This ensures that it is possible to build monotone (convergent) approximations.  Moreover, the structure of these quadtrees allows for easy mesh adaptation, with refinement criteria that can either be specified \emph{a priori} or determined automatically from the quality of the solution of~\eqref{eq:approxMeshfree}.

By exploiting the underyling structure of the quadtree meshes, we design an efficient method for constructing the set of neighbours $\Nf(i,\theta)$ required by our generalised finite difference stencils.  This leads to a much faster numerical method for approximating solutions of~\eqref{eq:PDE} that can easily handle singular solutions, complicated geometries, and highly non-uniform meshes.

Finally, we describe a strategy for producing higher-order almost-monotone methods using the framework of~\cite{FOFiltered}.  This requires combining the monotone scheme with a formally higher-order approximation.  By utilising the structure and symmetry within the quadtree mesh, we obtain a simple strategy for constructing higher-order schemes in the interior of the domain.  Near the boundary, where these simple schemes no longer exist, we propose a least-squares approach to constructing higher-order schemes.

\subsection{Contents}
In \autoref{sec:schemes}, we review the theory of generalised finite difference approximations for fully nonlinear elliptic equations.  In \autoref{sec:meshes}, we describe our strategy for constructing meshes and finite difference stencils. In \autoref{sec:accurate}, we describe a higher-order implementation of these methods. In \autoref{sec:examples}, we present several computational examples.  Finally, in \autoref{sec:conclusions}, we provide conclusions and perspectives.

\section{Generalised Finite Difference Schemes}\label{sec:schemes}

In this section we review existing results on the construction and convergence of numerical methods for fully nonlinear elliptic equations.

\subsection{Weak solutions}
One of the challenges associated with the approximation of fully nonlinear PDEs is the fact that classical (smooth) solutions may not exist.  It thus becomes necessary to interpret PDEs using some notion of weak solution, and the numerical methods that are used need to respect this notion of weak solution.  The most common concept of weak solution for this class of PDEs is the \emph{viscosity solution}, which involves transferring derivatives onto smooth test functions via a maximum principle argument~\cite{CIL}.

The PDEs we consider in this work belong to the class of degenerate elliptic equations,
\bq\label{eq:PDEelliptic} F(x,u(x),D^2u(x)) = 0, \quad x\in\bar{\Omega}\subset\R^2.\eq
\begin{definition}[Degenerate elliptic]\label{def:elliptic}
The operator
$F:\bar{\Omega}\times\R\times\Sf^2\to\R$
is \emph{degenerate elliptic} if 
\[ F(x,u,X) \leq F(x,v,Y) \]
whenever $u \leq v$ and $X \geq Y$.
\end{definition}

We note that the operator is also defined on the boundary $\partial\Omega$ of the domain, which allows both the PDE and the boundary conditions to be contained within equation~\eqref{eq:PDEelliptic}.  For example, if Dirichlet data $u(x) = g(x)$ is given on the boundary, the elliptic operator at the boundary will be defined by
\[ F(x,u(x),D^2u(x)) = u(x) - g(x), \quad x \in \partial\Omega. \]

The PDE operators~\eqref{eq:PDE} that we consider in this work are degenerate elliptic if they are non-decreasing functions of their second argument ($u$) and non-increasing functions of all subsequent arguments (which involve second directional derivatives).

Since degenerate elliptic equations need not have classical solutions, solutions need to be interpreted in a weak sense.  The numerical methods developed in this article are guided by the very powerful concept of the viscosity solution~\cite{CIL}.  Checking the definition of the viscosity solution requires checking the value of the PDE operator for smooth test functions lying above or below the semi-continuous envelopes of the candidate solution.

\begin{definition}[Upper and Lower Semi-Continuous Envelopes]\label{def:envelope}
The \emph{upper and lower semi-continuous envelopes} of a function $u(x)$ are defined, respectively, by
\[ u^*(x) = \limsup_{y\to x}u(y), \]
\[ u_*(x) = \liminf_{y\to x}u(y). \]
\end{definition}

\begin{definition}[Viscosity subsolution (supersolution)]\label{def:subsuper}
An upper (lower) semi-continuous function $u$ is a \emph{viscosity subsolution (supersolution)} of~\eqref{eq:PDE} if for every $\phi\in C^2(\bar{\Omega})$, whenever $u-\phi$ has a local maximum (minimum)  at $x \in \bar{\Omega}$, then
\[ 
F_*^{(*)}(x,u(x),D^2\phi(x)) \leq (\geq)  0 .
\]
\end{definition}
\begin{definition}[Viscosity solution]\label{def:viscosity}
A function $u$ is a \emph{viscosity solution} of~\eqref{eq:PDE} if $u^*$ is a subsolution and $u_*$ a supersolution.
\end{definition}


An important property of many elliptic equations is the comparison principle, which immediately implies uniqueness of the solution.
\begin{definition}[Comparison principle]\label{def:comparison}
A PDE has a \emph{comparison principle} if whenever $u$ is an upper semi-continuous subsolution and $v$ a lower semi-continuous supersolution of the equation, then $u \leq v$ on $\bar{\Omega}$.
\end{definition}

\subsection{Convergence of elliptic schemes}

In order to construct convergent approximations of elliptic operators, we will rely on the framework provided by Barles and Souganidis~\cite{BSnum} and further developed by Oberman~\cite{ObermanDiffSchemes}.

We consider finite difference schemes that have the form
\bq\label{eq:approx} F^\epsilon(x,u(x),u(x)-u(\cdot)) = 0 \eq
where $\epsilon$ is a small parameter.

The convergence framework requires notions of consistency and monotonicity, which we define below.

\begin{definition}[Consistency]\label{def:consistency}
The scheme~\eqref{eq:approx} is \emph{consistent} with the equation~\eqref{eq:PDE} if for any smooth function $\phi$ and $x\in\bar{\Omega}$,
\[ \limsup_{\epsilon\to0^+,y\to x,\xi\to0} F^\epsilon(y,\phi(y)+\xi,\phi(y)-\phi(\cdot)) \leq F^*(x,\phi(x),\nabla\phi(x),D^2\phi(x)), 
\]
\[ \liminf_{\epsilon\to0^+,y\to x,\xi\to0} F^\epsilon(y,\phi(y)+\xi,\phi(y)-\phi(\cdot)) \geq F_*(x,\phi(x),\nabla\phi(x),D^2\phi(x)). \]
\end{definition}

\begin{definition}[Monotonicity]\label{def:monotonicity}
The scheme~\eqref{eq:approx} is monotone if $F^\epsilon$ is a non-decreasing function of its final two arguments.
\end{definition}

Schemes that satisfy these two properties respect the notion of the viscosity solution at the discrete level.  In particular, these schemes preserve the maximum principle and are guaranteed to converge to the solution of the underlying PDE.

\begin{theorem}[Convergence~\cite{ObermanDiffSchemes}]\label{thm:convergeVisc}
Let $u$ be the unique viscosity solution of the PDE~\eqref{eq:PDE}, where $F$ is a degenerate elliptic operator with a comparison principle.  Let the finite difference approximation $F^\epsilon$ be consistent and monotone and let $u^\epsilon$ be any solution of the scheme~\eqref{eq:approx}, with bounds independent of $\epsilon$.  Then $u^\epsilon$ converges uniformly to $u$ as $\epsilon\to0$.
\end{theorem}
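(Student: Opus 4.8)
The plan is to prove this via the standard Barles--Souganidis argument, adapted as in Oberman's formulation. The heart of the matter is that a consistent, monotone scheme with uniformly bounded solutions produces, in the vanishing-$\epsilon$ limit, a pair of semicontinuous functions that are respectively a subsolution and a supersolution of the PDE; the comparison principle then forces them to coincide, yielding a continuous limit and uniform convergence.

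\medskip

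First I would define the half-relaxed limits
\[
\bar{u}(x) = \limsup_{\epsilon\to 0,\, y\to x} u^\epsilon(y), \qquad
\underline{u}(x) = \liminf_{\epsilon\to 0,\, y\to x} u^\epsilon(y).
\]
Since the $u^\epsilon$ are bounded independently of $\epsilon$, both $\bar u$ and $\underline u$ are finite; $\bar u$ is upper semicontinuous, $\underline u$ is lower semicontinuous, and $\underline u \le \bar u$ everywhere on $\bar\Omega$ by construction. The key step is to show that $\bar u$ is a viscosity subsolution and $\underline u$ a viscosity supersolution of~\eqref{eq:PDE}. I will do the subsolution case; the supersolution case is symmetric. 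Let $\phi\in C^2(\bar\Omega)$ and suppose $\bar u - \phi$ has a strict local maximum at $x_0$. A now-standard perturbation lemma (from the theory of half-relaxed limits) gives a sequence $\epsilon_k\to 0$ and points $x_k\to x_0$ at which $u^{\epsilon_k} - \phi$ attains a local maximum, with $u^{\epsilon_k}(x_k)\to \bar u(x_0)$; writing $\xi_k = u^{\epsilon_k}(x_k)-\phi(x_k)\to 0$, the local maximum property gives, for every $z$ near $x_k$,
\[
u^{\epsilon_k}(x_k) - u^{\epsilon_k}(z) \le \phi(x_k)-\phi(z).
\]
By monotonicity of $F^\epsilon$ in its last two arguments, replacing the true increments $u^{\epsilon_k}(x_k)-u^{\epsilon_k}(\cdot)$ by the larger increments $\phi(x_k)-\phi(\cdot)$ cannot decrease the value of the scheme, so from $F^{\epsilon_k}(x_k, u^{\epsilon_k}(x_k), u^{\epsilon_k}(x_k)-u^{\epsilon_k}(\cdot))=0$ we get
\[
F^{\epsilon_k}\!\left(x_k,\, \phi(x_k)+\xi_k,\, \phi(x_k)-\phi(\cdot)\right) \le 0.
\]
Taking $\limsup$ as $k\to\infty$ and invoking the consistency inequality of \autoref{def:consistency} yields $F^*(x_0,\phi(x_0),D^2\phi(x_0))\le 0$, which is exactly the subsolution condition at $x_0$. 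A routine reduction (replacing $\phi$ by $\phi + \delta|x-x_0|^2$ and letting $\delta\to 0$) removes the strictness assumption on the maximum.

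\medskip

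Having established that $\bar u$ is a subsolution and $\underline u$ a supersolution, I apply the comparison principle (which $F$ possesses by hypothesis) to conclude $\bar u \le \underline u$ on $\bar\Omega$. Combined with the reverse inequality $\underline u \le \bar u$ that holds by definition of the relaxed limits, this forces $\bar u = \underline u =: u^\star$, a continuous function that is simultaneously a subsolution and a supersolution, hence the viscosity solution of~\eqref{eq:PDE}; by uniqueness, $u^\star = u$. Finally, the equality of the upper and lower relaxed limits, together with a standard compactness argument, upgrades the pointwise statement to uniform convergence: if it failed, there would be $\delta>0$, $\epsilon_k\to 0$, and $x_k\to \bar x$ with $|u^{\epsilon_k}(x_k) - u(\bar x)|\ge \delta$, contradicting $\bar u(\bar x) = \underline u(\bar x) = u(\bar x)$.

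\medskip

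The main obstacle is the perturbation lemma that converts a local maximum of the relaxed limit $\bar u - \phi$ into local maxima of $u^{\epsilon_k}-\phi$ along a subsequence; this is where the boundedness hypothesis is genuinely used, and one must be careful near $\partial\Omega$ since the maximum of $\bar u - \phi$ may occur on the boundary, where the test-function class in \autoref{def:subsuper} still only requires $\phi\in C^2(\bar\Omega)$ and the operator $F$ already encodes the boundary condition. Everything else is bookkeeping: monotonicity is used once (to pass from the true scheme to the frozen test-function scheme) and consistency is used once (to take the limit), exactly as in~\cite{BSnum,ObermanDiffSchemes}.
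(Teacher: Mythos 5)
The paper does not prove this theorem; it is stated as a citation to Oberman~\cite{ObermanDiffSchemes} (the argument originates with Barles--Souganidis~\cite{BSnum}). Your proposal reproduces the standard proof of that result --- half-relaxed limits, the Barles--Perthame perturbation lemma, monotonicity-then-consistency, comparison, and a compactness upgrade to uniform convergence --- and the overall architecture is correct, including the observation that the boundary case causes no trouble because $F$ already encodes the boundary condition in the Barles--Souganidis formulation.

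However, the subsolution step as written contains sign errors, two of which compensate and one of which does not. A local maximum of $u^{\epsilon_k}-\phi$ at $x_k$ gives $(u^{\epsilon_k}-\phi)(z)\leq(u^{\epsilon_k}-\phi)(x_k)$, which rearranges to
\[
u^{\epsilon_k}(x_k)-u^{\epsilon_k}(z)\ \geq\ \phi(x_k)-\phi(z),
\]
the reverse of what you wrote. Since $F^\epsilon$ is nondecreasing in its final argument, replacing the \emph{larger} true increments by the \emph{smaller} test-function increments \emph{decreases} the scheme, which is how one obtains $F^{\epsilon_k}(x_k,\phi(x_k)+\xi_k,\phi(x_k)-\phi(\cdot))\leq 0$; your narration (``replacing by larger increments cannot decrease the scheme'') would instead yield $\geq 0$ and contradict the inequality you then display, so the conclusion is right but the reasoning is not. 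The error that does not compensate is at the consistency step: with $F^{\epsilon_k}(\cdots)\leq 0$ along $(\epsilon_k,x_k,\xi_k)\to(0,x_0,0)$, one bounds the $\liminf$ over all approaches above by $\liminf_k F^{\epsilon_k}(\cdots)\leq 0$, and the $\liminf$ inequality of Definition~\ref{def:consistency} then gives $F_*(x_0,\phi(x_0),D^2\phi(x_0))\leq 0$, which is precisely the subsolution condition of Definition~\ref{def:subsuper}. Your $\limsup$-and-$F^*$ version does not follow: the $\limsup$ consistency bound runs in the wrong direction to exploit an upper bound on $F^{\epsilon_k}$, and it is the lower envelope $F_*$, not $F^*$, that appears in the subsolution requirement. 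Once these signs are corrected, the proof is the standard one and is complete.
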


The above theorem assumes existence of a bounded solution to the approximation scheme.  This is typically straightforward to show for a consistent, monotone approximation of a well-posed PDE, though the precise details can vary slightly and rely on available well-posedness theory for the PDE in question.  

\begin{theorem}[Existence and Stability~{\cite[Lemmas~35-36]{FroeseGauss}}]\label{thm:stability}
Let $F^\epsilon$ be a consistent, monotone scheme that is Lipschitz in its last two arguments.  Suppose also that there exist strict classical sub- and super-solutions to the PDE~\eqref{eq:PDEelliptic}.  Then for small enough $\epsilon>0$, the scheme~\eqref{eq:approx} has a solution~$u^\epsilon$.  Moreover, there exists a constant $M>0$ such that $\|u^\epsilon\|_\infty \leq M$ for sufficiently small $\epsilon>0$.
\end{theorem}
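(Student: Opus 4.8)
The plan is to run a discrete Perron / monotone-iteration argument on the order interval trapped between the two barriers, using consistency to convert the strict classical sub- and super-solutions into \emph{discrete} strict sub- and super-solutions of the scheme, and using monotonicity to obtain a discrete comparison principle. The first step is to produce the discrete barriers. Let $\underline u$ be a strict classical subsolution of~\eqref{eq:PDEelliptic}, so that $F(x,\underline u(x),D^2\underline u(x))\le-\delta$ on $\bar\Omega$ for some $\delta>0$ (replacing $F$ by $F^*$ if $F$ is discontinuous). Since $\underline u$ is smooth and $\bar\Omega$ is compact, the pointwise consistency estimate of \autoref{def:consistency} upgrades to a locally, hence globally, uniform one, so that for all sufficiently small $\epsilon$ one has $F^\epsilon(x_i,\underline u(x_i),\underline u(x_i)-\underline u(\cdot))\le-\delta/2<0$ at every discretisation point $x_i$. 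Symmetrically, the strict classical supersolution $\overline u$ gives $F^\epsilon(x_i,\overline u(x_i),\overline u(x_i)-\overline u(\cdot))\ge\delta'/2>0$ for $\epsilon$ small. This is the only place where the smallness of $\epsilon$ enters.

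Next I would establish a discrete comparison principle. The scheme is a finite system $F^\epsilon[u]=0$ for $u\in\R^N$ (one equation per discretisation point), and monotonicity means that each component $F^\epsilon[u]_i=F^\epsilon(x_i,u_i,(u_i-u_j)_j)$ is nondecreasing in $u_i$ and in each difference $u_i-u_j$. I claim that if $F^\epsilon[v]<0$ and $F^\epsilon[w]\ge0$ componentwise, then $v\le w$: at a point $x_k$ realising $\max_i(v_i-w_i)$ one has $v_k\ge w_k$ and $v_k-v_j\ge w_k-w_j$ for all $j$, so monotonicity forces $0>F^\epsilon[v]_k\ge F^\epsilon[w]_k\ge0$, a contradiction; the symmetric statement ($F^\epsilon[v]\le0$, $F^\epsilon[w]>0$ $\Rightarrow$ $v\le w$) holds the same way. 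Applying these with $(v,w)=(\underline u,u^\epsilon)$ and with $(v,w)=(u^\epsilon,\overline u)$ shows that \emph{any} solution $u^\epsilon$ of the scheme satisfies $\underline u\le u^\epsilon\le\overline u$ at every discretisation point, hence $\|u^\epsilon\|_\infty\le M:=\max\{\|\underline u\|_\infty,\|\overline u\|_\infty\}$ uniformly in $\epsilon$, which is the stability assertion.

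For existence I would use the Euler map $S_\mu(u)=u-\mu F^\epsilon[u]$. Using the Lipschitz bound on $F^\epsilon$ together with monotonicity, one checks that for $\mu>0$ small enough (depending on the Lipschitz constant and the maximum stencil size, but that is fine for fixed $\epsilon$) $S_\mu$ is order preserving. The barrier inequalities $F^\epsilon[\underline u]<0\le F^\epsilon[\overline u]$ give $S_\mu(\underline u)\ge\underline u$ and $S_\mu(\overline u)\le\overline u$, and order preservation then shows that $S_\mu$ maps the order interval $[\underline u,\overline u]$ into itself. Iterating from $\overline u$ produces a nonincreasing sequence bounded below by $\underline u$, which therefore converges (in this finite-dimensional setting) to a fixed point $u^\epsilon$ of $S_\mu$, i.e.\ to a solution of $F^\epsilon[u^\epsilon]=0$; the bound $M$ from the previous step applies to it. As an alternative one can apply Brouwer's theorem to $u\mapsto P_{[\underline u,\overline u]}(u-\mu F^\epsilon[u])$ and use the \emph{strict} barrier inequalities to rule out the fixed point lying on the boundary of the box, so that the projection is inactive and $F^\epsilon[u^\epsilon]=0$.

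The main obstacle is verifying the order-preserving property of $S_\mu$: monotonicity alone fixes the sign of the dependence on the central value $u_i$ and on the neighbour differences, but to absorb the mixed increments $(w_i-v_i)-(w_j-v_j)$ that appear when comparing $S_\mu(w)$ with $S_\mu(v)$ one must combine this with the Lipschitz bound and choose $\mu$ correctly. This is the one genuinely computational point in the argument, and it is precisely where the Lipschitz hypothesis is used. A secondary, routine technicality is the passage from the pointwise consistency of \autoref{def:consistency} to the locally uniform version needed to turn $\underline u$ and $\overline u$ into discrete barriers.
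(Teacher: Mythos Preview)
The paper does not contain a proof of this theorem: it is stated with an external citation to \cite[Lemmas~35--36]{FroeseGauss} and no argument is given in the present article. So there is nothing in the paper to compare your attempt against directly.

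That said, your proposal is correct and is exactly the standard route used in the cited reference and in \cite{ObermanDiffSchemes}. The three ingredients --- (i) upgrading the strict classical barriers to strict discrete barriers via uniform consistency on the compact $\bar\Omega$, (ii) the discrete comparison principle obtained by evaluating at the maximiser of $v-w$, and (iii) existence through the order-preserving Euler map $S_\mu(u)=u-\mu F^\epsilon[u]$ on the order interval $[\underline u,\overline u]$ --- are precisely the content of the lemmas the paper cites. Your identification of the one nontrivial computation (choosing $\mu$ small relative to the Lipschitz constant times the stencil size so that $S_\mu$ is monotone) is accurate, and your sketch of the bound $F^\epsilon[w]_i-F^\epsilon[v]_i\le K(w_i-v_i)$ via monotonicity in the neighbour values plus Lipschitz in the central value is the correct mechanism. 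The alternative Brouwer argument you mention also works. There is no gap.
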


In many cases, simple quadratic functions will serve as the sub- and super-solutions required by Theorem~\ref{thm:stability}.  For more complicated PDE operators, particularly those with a non-trivial dependence on the gradient $\nabla u$, the theory of classical solutions of the equation can often be used to show the existence of these sub- and super-solutions.

\subsection{Meshfree finite difference approximations}

In~\cite{FroeseMeshfree}, a new generalised finite difference method was introduced for approximating fully nonlinear second order elliptic operators on point clouds.  We review the key results of that work, which will be foundational to the higher-order adaptive methods that will be developed in the remainder of this article.

\begin{definition}[Notation]\label{def:notation}
\end{definition}
\begin{enumerate}
\item[(N1)] $\Omega\subset\R^2$ is a bounded domain with Lipschitz boundary $\partial\Omega$. 
\item[(N2)] $\G\subset\bar{\Omega}$ is a point cloud consisting of the points $x_i$, $i=1,\ldots,N$.
\item[(N3)] $h = \sup\limits_{x\in{\Omega}}\min\limits_{y\in\G}\abs{x-y}$ is the spatial resolution of the point cloud.  In particular, every ball of radius $h$ contained in $\bar{\Omega}$ contains at least one discretisation point $x_i$.
\item[(N4)] $h_B = \sup\limits_{x\in{\partial\Omega}}\min\limits_{y\in\G\cap\partial\Omega}\abs{x-y}$ is the resolution of the point cloud on the boundary.  In particular, every ball of radius $h_B$ centred at a boundary point $x\in\partial\Omega$ contains at least one discretisation point $x_i \in \G\cap\partial\Omega$ on the boundary.
\item[(N5)] $\delta = \min\limits_{x\in\Omega\cap\G}\inf\limits_{y\in\partial\Omega}\abs{x-y}$ is the distance between the set of interior discretisation points and the boundary.  In particular, if $x_i\in\G\cap\Omega$ and $x_j\in\partial\Omega$, then the distance between $x_i$ and $x_j$ is at least $\delta$.
\item[(N6)] $d\phi$ is the angular resolution used to approximate the second directional derivatives $u_{\theta\theta}$.
\item[(N7)] $d\theta$ is the angular resolution used to approximate the nonlinear operator.
\item[(N8)] $\epsilon$ is the search radius associated with the point cloud.
\end{enumerate}

Discretising the PDE requires approximating second directional derivatives $u_{\theta\theta}$ at each interior discretisation point $x_i\in\G$.  To accomplish this, we consider all points $x_j\in\G\cap B(x_i,\epsilon)$ within a search neighbourhood of radius $\epsilon$ centred at $x_i$.  Discretisation points within this neighbourhood can be written in polar coordinates $(r,\phi)$ with respect to the axes defined by the lines $x_0 + t(\cos\theta,\sin\theta)$, $x_0 + t(-\sin\theta,\cos\theta)$.  We seek one neighbouring discretisation point in each quadrant described by these axes, with each neighbour aligning as closely as possible with the line $x_0 + t\nu$, where $\nu = (\cos\theta,\sin\theta)$.  That is, we select the neighbours
\bq\label{eq:neighbours} x_j \in \argmin\left\{{\sin^2\phi} \mid (r,\phi)\in\G^h\cap B(x_0,\epsilon) \text{ is in the $j$th quadrant}\right\}\eq
for $j = 1, \ldots, 4$.  See Figure~\ref{fig:stencil}. {We say that a stencil with angular resolution $d\phi$ exists for the point cloud $\G$ if for all interior discretisation points, the four discretisation points $x_j \in \G$ defined by~\eqref{eq:neighbours} exist and satisfy $d\phi = \max \{\phi_j\}$.} 

Because of the ``wide-stencil'' nature of these approximations (since the search radius $\epsilon \gg h$), care must be taken near the boundary.  In order to preserve consistency up to the boundary, it is necessary that the boundary be more highly resolved than the interior ($h_B \ll h$).  In particular, this means that a simple Cartesian mesh (or piecewise Cartesian mesh) is \emph{not} sufficient for producing consistent schemes up to the boundary.

\begin{figure}[htp]
\centering
\subfigure[]{
\includegraphics[width=0.4\textwidth]{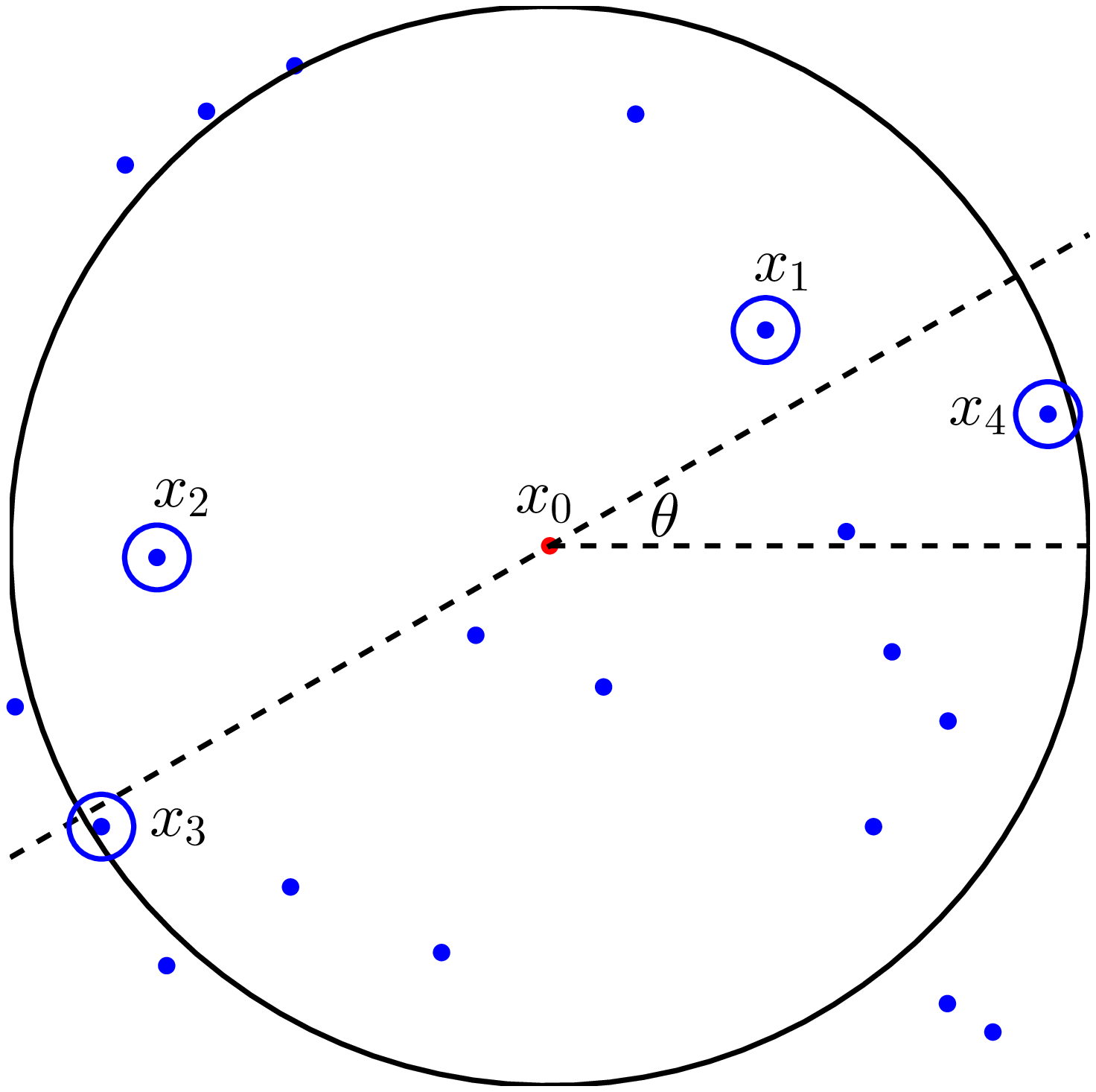}\label{fig:stencil1}}
\subfigure[]{
\includegraphics[width=0.47\textwidth]{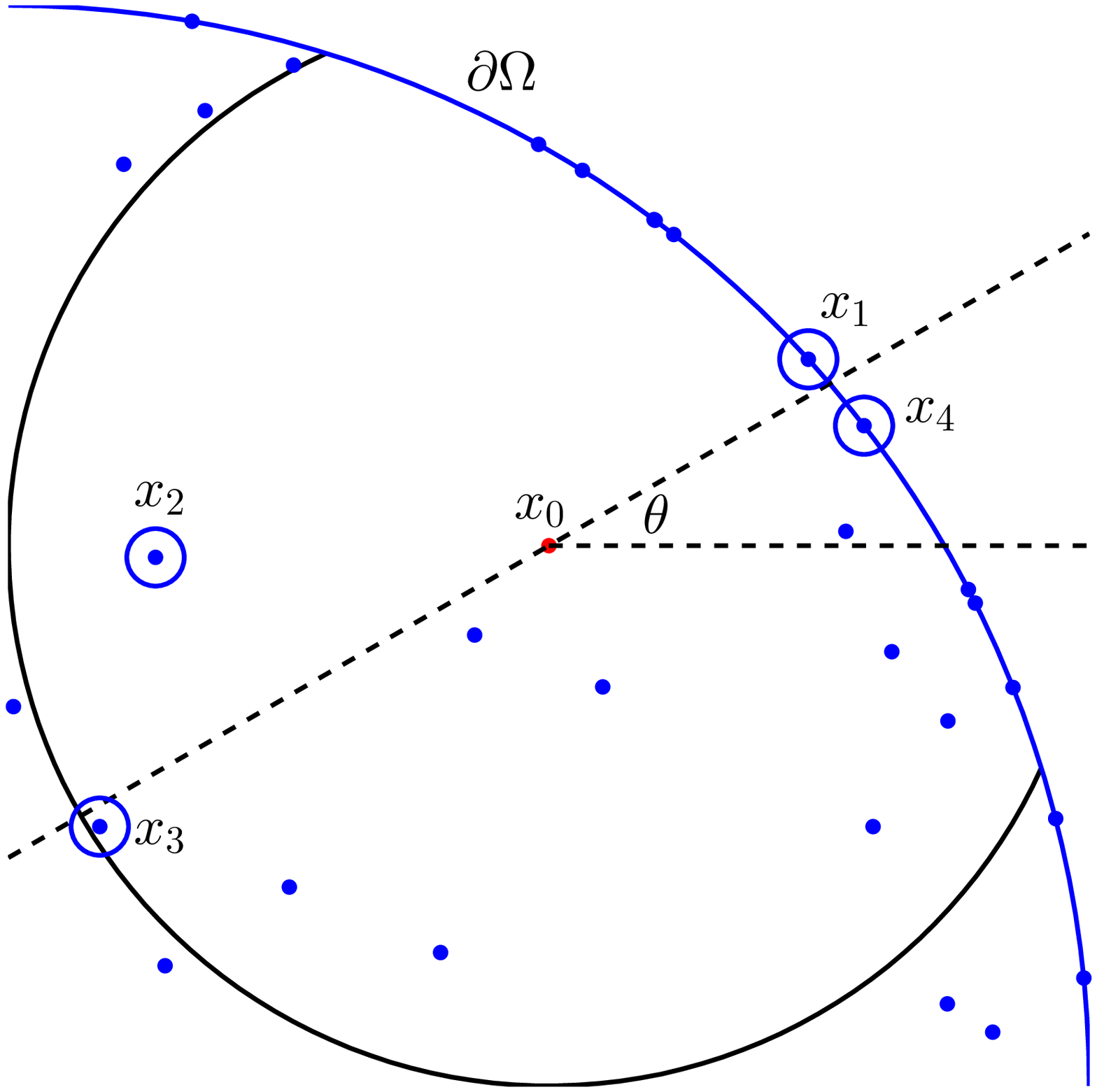}\label{fig:stencil2}}
\caption{A finite difference stencil chosen from a point cloud \subref{fig:stencil1}~in the interior and \subref{fig:stencil2}~near the boundary.}
\label{fig:stencil}
\end{figure}

Then a consistent, monotone approximation of $u_{\theta\theta}$ is
\[ \Dt_{\theta\theta}^hu(x_0) = \sum\limits_{j=1}^4 a_j(u(x_j)-u(x_0)) \]
where we use the polar coordinate characterisation of the neighbours to define
\[ S_j =  r_j\sin\phi_j, \quad C_j = r_j\cos\phi_j\]
and the coefficients are given by
\[\begin{split}
a_1 &= \frac{2S_4(C_3S_2-C_2S_3)}{(C_3S_2-C_2S_3)(C_1^2S_4-C_4^2S_1)-(C_1S_4-C_4S_1)(C_3^2S_2-C_2^2S_3)}\\
a_2 &= \frac{2S_3(C_1S_4-C_4S_1)}{(C_3S_2-C_2S_3)(C_1^2S_4-C_4^2S_1)-(C_1S_4-C_4S_1)(C_3^2S_2-C_2^2S_3)}\\
a_3 &= \frac{-2S_2(C_1S_4-C_4S_1)}{(C_3S_2-C_2S_3)(C_1^2S_4-C_4^2S_1)-(C_1S_4-C_4S_1)(C_3^2S_2-C_2^2S_3)}\\
a_4 &= \frac{-2S_1(C_3S_2-C_2S_3)}{(C_3S_2-C_2S_3)(C_1^2S_4-C_4^2S_1)-(C_1S_4-C_4S_1)(C_3^2S_2-C_2^2S_3)}.
\end{split}
\]

In general, the PDE requires evaluating second directional derivatives in all possible directions.  Instead, we consider a finite subset $\Af=\left\{jd\theta\mid j = 0, \ldots, \floor{\frac{2\pi}{d\theta}}\right\}\subset[0,2\pi)$ with a resolution $d\theta$.

Then we can substitute these coefficients into~\eqref{eq:approxMeshfree} to obtain the scheme:
\bq\label{eq:approxMeshfree2} F_i[u] \equiv \max\limits_{\theta\in{\Af}}F_\theta\left(x_i, u(x_i), \sum\limits_{j\in\Nf(i,\theta)}a_{i,j,\theta}(u(x_i)-u(x_j))\right) = 0, \quad x_i \in \G. \eq

We recall the convergence result from~\cite[Theorem~18]{FroeseMeshfree}.
\begin{theorem}[Convergence]\label{thm:convergenceStencils}
Let $F$ be a degenerate elliptic operator with a comparison principle that is Lipschitz continuous in $u_{\theta\theta}$ for each $\theta\in[0,2\pi)$ and let $u$ be the unique viscosity solution of the PDE~\eqref{eq:PDE}. Suppose also that~\eqref{eq:PDE} has a strict classical sub- and super-solution.  Consider a sequence of point clouds $\G^n$, with parameters defined as in Definition~\ref{def:notation}, which satisfy the following conditions.
\begin{itemize}
\item The spatial resolution $h^n\to0$ as $n\to\infty$.
\item The boundary resolution satisfies $h_B^n/\delta^n \to 0$ as $n\to\infty$.
\item The search radius satisfies both $\epsilon^n\to0$ and $h^n/\epsilon^n\to0$ as $n\to\infty$.
\item The angular resolution $d\theta^n\to0$ as $h^n\to0$.
\end{itemize}
Then for sufficiently large $n$, the approximation scheme~\eqref{eq:approxMeshfree2} admits a solution~$u^n$ and $u^n$ converges uniformly to $u$ as $n\to\infty$.
\end{theorem}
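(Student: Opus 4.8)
The plan is to verify the hypotheses of the Barles–Souganidis convergence theorem (Theorem~\ref{thm:convergeVisc}) for the scheme~\eqref{eq:approxMeshfree2}, together with the existence/stability result of Theorem~\ref{thm:stability}, and to conclude. The three ingredients to establish are: (i) monotonicity of $F_i[u]$ in the sense of Definition~\ref{def:monotonicity}; (ii) consistency in the sense of Definition~\ref{def:consistency}, holding \emph{up to the boundary}; and (iii) the existence of a bounded solution $u^n$ to the scheme, uniformly in $n$. Items (i) and (iii) are comparatively routine given the structure already exposed in the excerpt, so the bulk of the work is (ii), and within (ii) the genuinely delicate point is consistency at boundary points, where the wide-stencil radius $\epsilon^n$ is not small relative to the distance to the boundary.

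First I would check monotonicity. Each $F_\theta$ is assumed non-decreasing in $u(x_i)$ and non-increasing in its third argument (the approximated second directional derivative); since the finite-difference operator $\sum_{j}a_{i,j,\theta}(u(x_i)-u(x_j))$ has all coefficients $a_{i,j,\theta}\ge 0$ (this is precisely the ``consistent, monotone'' property of the $\Dt_{\theta\theta}^h$ stencil recorded in the excerpt, and is what motivates the quadrant selection~\eqref{eq:neighbours}), a decrease in $u(x_j)$ for $j\neq i$ or an increase in $u(x_i)$ can only increase each $F_\theta$, hence their maximum over the finite set $\Af$. So $F_i[u]$ is non-decreasing in its final arguments. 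Next, for existence and stability (item (iii)) I would invoke Theorem~\ref{thm:stability}: the scheme is Lipschitz in its last two arguments (each $F_\theta$ is Lipschitz in $u$ and in $u_{\theta\theta}$, and the stencil coefficients, while mesh-dependent, give a bounded linear map for each fixed $n$), and the PDE is assumed to possess strict classical sub- and super-solutions; thus for $n$ large the scheme has a solution $u^n$ with $\|u^n\|_\infty\le M$ independent of $n$.

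The core step is consistency. Fix a smooth $\phi$ and a point $x\in\bar\Omega$, and let $y\to x$, $\epsilon^n\to 0$, $\xi\to 0$. At an interior point $y$: the hypotheses $\epsilon^n\to0$ and $h^n/\epsilon^n\to0$ guarantee, via the quadrant construction, that the four neighbours selected by~\eqref{eq:neighbours} lie at distance $O(\epsilon^n)$ and align with $e_\theta$ to angular accuracy $d\phi^n\to 0$ (this is the ``stencil with angular resolution $d\phi$ exists'' condition, which the sequence of point clouds is assumed to satisfy with $d\phi^n\to 0$); a Taylor expansion of $\phi$ then shows $\Dt_{\theta\theta}^h\phi(y)\to\phi_{\theta\theta}(x)$, and since $d\theta^n\to 0$ the discrete maximum over $\Af^n$ converges to the continuous maximum over $[0,2\pi)$, yielding $F_i[\phi+\xi]\to F(x,\phi(x),D^2\phi(x))$ at interior $x$. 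At a boundary point $x\in\partial\Omega$, the operator may be discontinuous (e.g.\ the switch to $u-g$ for Dirichlet data), so one only needs the one-sided $\limsup\le F^*$ and $\liminf\ge F_*$ inequalities. The key geometric fact that makes this work is the condition $h_B^n/\delta^n\to 0$: because the boundary is resolved much more finely than the interior is spaced from it, for an interior node $x_i$ near the boundary the search ball $B(x_i,\epsilon^n)$ meets $\G^n\cap\partial\Omega$ densely enough that valid neighbours in each quadrant still exist and are still angularly well-aligned, so the stencil does not degenerate; and at nodes $x_i\in\G^n\cap\partial\Omega$ the operator is simply the (consistent) boundary operator. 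One then reproduces the argument of~\cite[Theorem~18]{FroeseMeshfree}: split into the cases $x\in\Omega$ and $x\in\partial\Omega$, and in the boundary case use that any limiting test-function configuration is either an interior-type stencil (handled by the Taylor estimate) or degenerates in a way controlled by the envelopes $F^*,F_*$, giving the required inequalities.

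The main obstacle is exactly this boundary consistency: one must show that, under $h_B^n/\delta^n\to 0$, the neighbour-selection~\eqref{eq:neighbours} never fails to return four admissible points with $d\phi^n\to0$ for interior nodes arbitrarily close to $\partial\Omega$, i.e.\ that the wide stencils remain nondegenerate uniformly as they ``see'' the boundary. Since the present theorem is stated as a recollection of~\cite[Theorem~18]{FroeseMeshfree}, the cleanest route is to observe that the listed hypotheses on $\{\G^n\}$ are precisely those of that theorem (spatial resolution, boundary-to-gap ratio, search radius with $h^n/\epsilon^n\to0$, angular resolution), so that the stencil-existence and consistency analysis there applies verbatim, and then combine it with Theorems~\ref{thm:stability} and~\ref{thm:convergeVisc} to obtain uniform convergence of $u^n$ to $u$.
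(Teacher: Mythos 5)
The paper does not prove Theorem~\ref{thm:convergenceStencils}; it explicitly recalls it as \cite[Theorem~18]{FroeseMeshfree}, so there is no in-paper proof to compare against. Your sketch is a reasonable reconstruction of the argument one would expect in that reference: verify monotonicity of~\eqref{eq:approxMeshfree2} (via positivity of the $a_{i,j,\theta}$ and the fact that a maximum of monotone schemes is monotone), verify consistency via Taylor expansion together with the angular-resolution control $d\phi^n = \bO(\max\{h^n/\epsilon^n,\,h_B^n/\delta^n\}) \to 0$ and $d\theta^n\to 0$, obtain existence and a uniform bound from Theorem~\ref{thm:stability}, and conclude by Theorem~\ref{thm:convergeVisc}. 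You correctly isolate boundary consistency as the delicate step and correctly identify $h_B^n/\delta^n\to 0$ as the hypothesis that prevents the wide stencil from degenerating near $\partial\Omega$; this matches the discussion the paper does give surrounding Figure~\ref{fig:MeshfreeAngular} and Lemma~\ref{lem:augmentedQuadtree}. You also, appropriately, observe at the end that since the listed hypotheses are exactly those of the cited theorem, the honest proof in the present paper's context is simply to invoke \cite[Theorem~18]{FroeseMeshfree}, which is precisely what the authors do.

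One small caution if you were to flesh out the consistency step yourself: the statement in Definition~\ref{def:consistency} is written with $F^*$ and $F_*$ evaluated at $(x,\phi(x),\nabla\phi(x),D^2\phi(x))$, whereas the PDE operator~\eqref{eq:PDE} as written has no gradient argument; you should make sure the Taylor estimate you use for $\Dt^h_{\theta\theta}\phi$ controls the first-order contribution, which is where the alignment condition $d\phi\to 0$ and the symmetry of choosing one neighbour per quadrant are actually used (without them the stencil would be only first-order consistent in $\phi$ and the directional second derivative would pick up a spurious $O(1)$ gradient term). Your sketch gestures at this but does not spell it out, so flag it if you turn the sketch into a full proof.
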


We note that the angular resolution that emerges from the scheme satisfies $d\phi = \bO(\max\{h/\epsilon,h_B/\delta\})$ (Figure~\ref{fig:MeshfreeAngular}).  For a uniform grid, a natural choice of parameters is $\epsilon = \bO(\sqrt{h})$, $h_B = \bO(h^{3/2})$, $\delta = \bO(h)$, $d\theta = \bO(\sqrt{h})$.  This leads to a formally optimal discretisation error of $\bO(\sqrt{h})$.

We remark also that these parameters can be defined locally instead of globally in order to accommodate highly non-uniform meshes.

\begin{figure}
\centering
\subfigure[]{\includegraphics[width=\widthtwofigures]{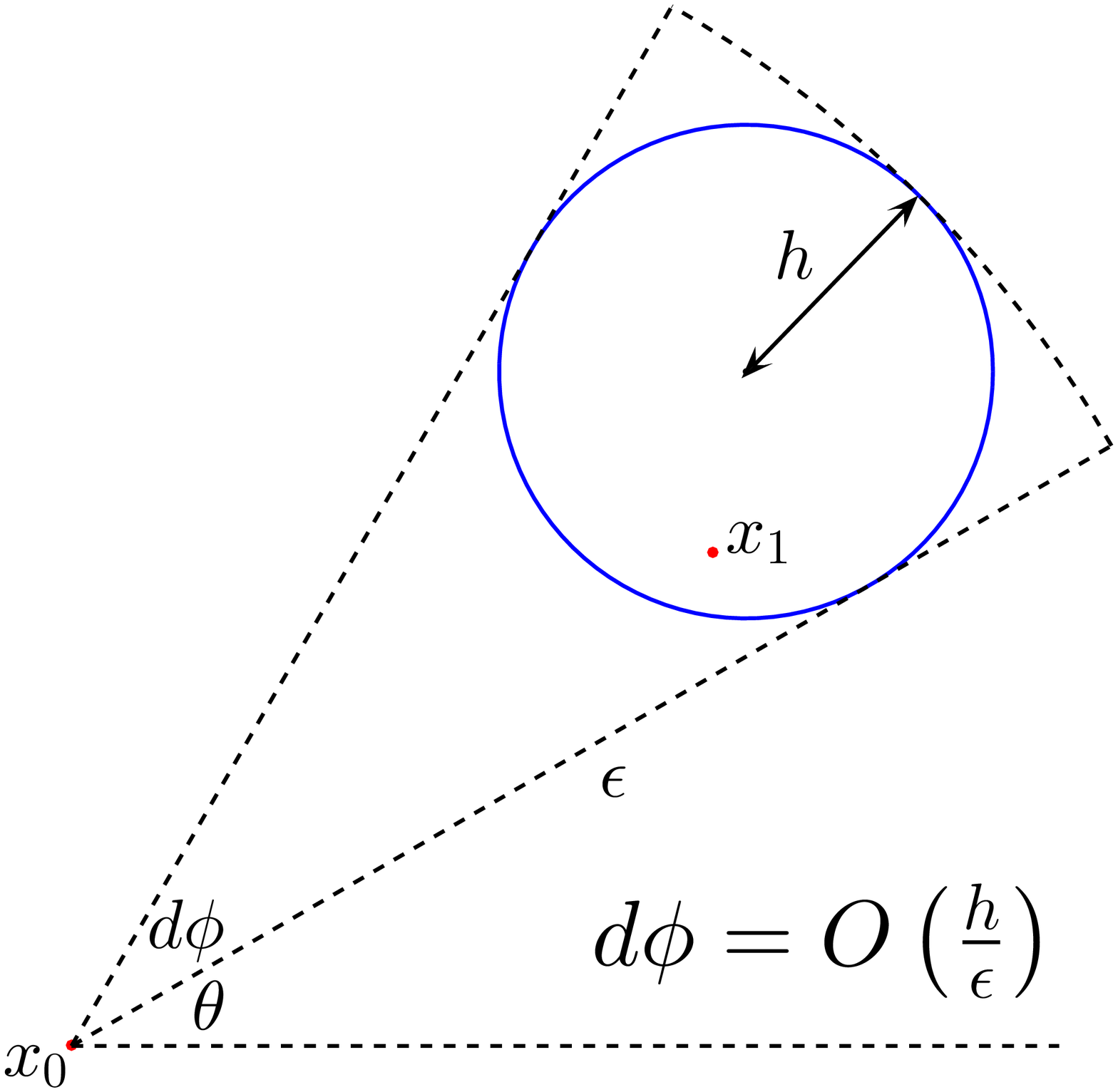}}\label{fig:MeshfreeExistence}
\subfigure[]{\includegraphics[width=\widthtwofigures]{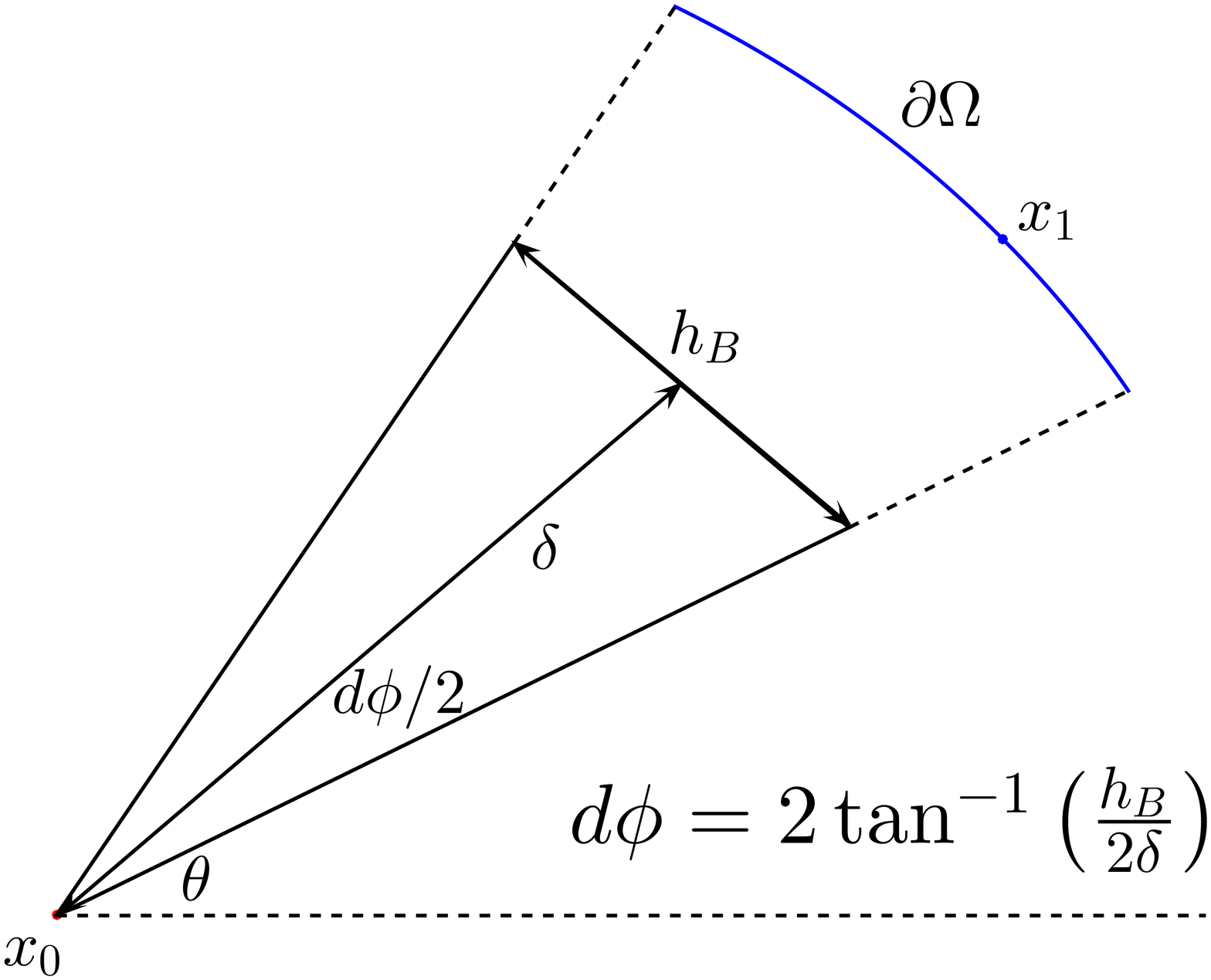}}\label{fig:MeshfreeBoundaryExist}
\caption{The angular resolution of a generalised finite difference stencil.}
\label{fig:MeshfreeAngular}
\end{figure}

\section{Construction of Meshes and Stencils}\label{sec:meshes}

In this section, we explain how we use augmented quadtrees~\cite[Chapter 14]{bookquadtree} to build piecewise Cartesian meshes with additional discretisation points on the boundary. We organise this section as follows. In \autoref{sec:quadtree}, we recall the basic structure of a quadtree.  In \autoref{sec:meshes_boundary} we explain how we augment the quadtree to deal with the boundary. In \autoref{sec:meshes_stencilsearch}, we explain how the quadtrees are used to efficiently find the stencils. Finally, in \autoref{sec:meshes_adaptivity} we discuss mesh adaptation.

\subsection{Quadtrees}\label{sec:quadtree}

Quadtrees are based on a simple idea: a square can be divided into four smaller squares which correspond to the four quadrants of the square. A quadtree is then a rooted tree in which every internal node has four children and every node in the tree corresponds to a square. A square with no children is called a leaf square. See Figure \ref{fig:quadtree}.

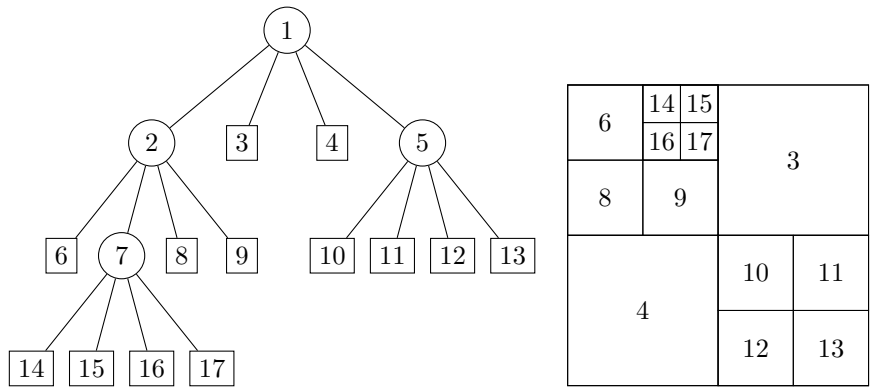
\begin{figure}[h]
\centering

\begin{tabular}{cc}
\begin{tikzpicture}[every node/.style={rectangle,draw},level 1/.style={sibling distance=12mm},level 2/.style={sibling distance=8mm},edge from parent/.style={draw, edge from parent path={(\tikzparentnode) -- (\tikzchildnode)}}
]
\node[draw,circle]{1}
child { node[draw,circle] {2} 
child { node {6} }
child { node[draw,circle] {7} child { node {14} } child { node {15} } child { node {16}} child { node {17}}}
child { node {8} }
child { node {9} }}
child { node {3} }
child { node {4} }
child { node[draw,circle] {5} child {node {10}} child {node {11}} child {node {12}} child {node {13}} }
;
\end{tikzpicture} & 
\begin{tikzpicture}[scale=0.5]
\draw (0,0) rectangle (8,8); 
\draw (0,4) rectangle (4,8); 
\draw (4,4) rectangle node{3} (8,8); 
\draw (0,0) rectangle node{4} (4,4); 
\draw (4,0) rectangle (8,4); 
\draw (0,6) rectangle node{6} (2,8); 
\draw (2,6) rectangle (4,8); 
\draw (0,4) rectangle node{8} (2,6); 
\draw (2,4) rectangle node{9} (4,6); 
\draw (4,2) rectangle node{10} (6,4); 
\draw (6,2) rectangle node{11} (8,4); 
\draw (4,0) rectangle node{12} (6,2); 
\draw (6,0) rectangle node{13} (8,2); 
\draw (2,7) rectangle node{14} (3,8); 
\draw (3,7) rectangle node{15} (4,8); 
\draw (2,6) rectangle node{16} (3,7); 
\draw (3,6) rectangle node{17} (4,7); 
\end{tikzpicture}
\end{tabular}
\caption{A quadtree and the corresponding subdivision. The internal nodes are represented with circles and the leaves with squares.}
\label{fig:quadtree}
\end{figure}

Quadtrees can then easily be used to build uniform and non-uniform meshes: the squares' vertices are the mesh points. This structure is appealing because it is general enough to allow for local mesh adaptation, while still maintaining enough structure to efficiently build the finite difference stencils. Indeed, as we will see in \autoref{sec:meshes_stencilsearch}, the quadtree structure allow us to significantly reduce the number of mesh points inspected when constructing our stencils. However, the quadtree in and of itself is not ideal for handling complicated geometries as the mesh points are restricted to be vertices of the squares. 
We observe that the global spatial resolution $h$ of a quadtree corresponds to the length-scale of the largest leaf square.  However, the quadtree can be highly non-uniform and the local spatial resolution near a particular point may be much less than $h$.

\subsection{Meshing the boundary}\label{sec:meshes_boundary}
Quadtrees alone are not enough for the schemes proposed here: the boundary requires additional treatment. As discussed in \autoref{sec:schemes}, the boundary must be more highly resolved than the interior to maintain consistency of the numerical method. As a result, we cannot restrict the mesh points to be the vertices of the squares in the quadtree. 

To overcome this, we build augmented quadtrees: each leaf square that intersects the boundary is marked as such and additional mesh points that lie on the boundary are added and associated with this boundary leaf square. The immediate advantage of this approach is that mesh points may lie exactly on the boundary, which allows us to handle complicated geometries with ease. In addition, by keeping track of which boundary leaf square the mesh points belong to, we preserve one of the key properties of the quadtree: knowledge of the relative position of the mesh points. This allows for efficient construction of the finite difference stencils.

We make the following general assumption: each edge of a leaf square intersects the boundary at most once. This is a reasonable assumption that simply entails that our quadtree must be sufficiently refined near the boundary. See Figure~\ref{fig:MeshesQuadtrees}, where each edge of the grey squares intersects the boundary at most once.

The only question left to address is exactly how many additional boundary mesh points one must add to guarantee the existence of a consistent stencil. This is addressed in Theorem~\ref{thm:convergenceStencils}, which requires that the boundary resolution go to zero more quickly than the resolution of the ``standard'' quadtree, $h_B = o(h)$, and more quickly than the gap between the boundary and the interior nodes, $h_B = o(\delta)$.  Note that these conditions need only be satisfied locally rather than globally.

We define a simple algorithm that enlarges a given point cloud so that the condition $h_B \leq 2\delta \tan(d\theta/2)$ is satisfied locally (see Algorithm \ref{alg:mesh_boundary}).  This ensures that the angular resolution of the finite difference approximations is commensurate with the angular resolution used to approximate the nonlinear operator: $d\phi \lesssim 2\tan^{-1}\left(\dfrac{h_B}{2\delta}\right) \leq d\theta$~(Figure~\ref{fig:MeshfreeAngular}).

\begin{algorithm}[h]
\caption{Building augmented quadtrees}
\label{alg:mesh_boundary}
\begin{algorithmic}[1]
\FOR{each boundary leaf square $S$}
\STATE Add the points in $\partial S \cap \boundary$ to the point cloud $\G$.
\STATE Compute $X = \Omega \cap \G \cap S$.
\STATE Compute $\delta = \min_{x \in X} \min_{y \in \boundary \cap S} \Abs{x-y}$.
\STATE Compute the arc length, $l$, of the curve $\partial \Omega \cap S$.
\STATE Compute the desired boundary resolution $h_B = 4\delta\tan(d\theta/2)$.
\STATE Select $\lceil l/h_B\rceil$ points lying on the curve $\partial \Omega \cap S$.
\STATE Add these points to the point cloud $\G$.
\ENDFOR
\end{algorithmic}
\end{algorithm}


In Figure~\ref{fig:MeshesQuadtrees}, the meshes obtained by applying Algorithm~\ref{alg:mesh_boundary} to the point cloud obtained from a quadtree of depth $4$ are displayed. The fan-shaped domain illustrates the advantages of the local criteria: the boundary is only highly resolved when there are interior mesh points nearby.

%

These augmented quadtrees enable us to construct convergent (consistent and monotone) finite difference approximations.

\begin{lemma}[Approximation with augmented quadtrees]\label{lem:augmentedQuadtree}
Consider a sequence of augmented quadtrees $\G^n$ constructed via Algorithm~\ref{alg:mesh_boundary} with spatial resolution $h_n\to0$. Consider also a sequence of search radii $\epsilon^n = \bO(\sqrt{h^n})$ and angular resolutions $d\theta^n=\bO(\sqrt{h^n})$. 
Then $\G^n$ satisfies the hypotheses of Theorem~\ref{thm:convergenceStencils}.
\end{lemma}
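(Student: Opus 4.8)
The plan is to verify, one by one, each of the four hypotheses of Theorem~\ref{thm:convergenceStencils} for the sequence $\G^n$, using the explicit choices $\epsilon^n = \bO(\sqrt{h^n})$, $d\theta^n = \bO(\sqrt{h^n})$, and the resolution relation $h_B = 4\delta\tan(d\theta/2)$ baked into Algorithm~\ref{alg:mesh_boundary}. The first bullet is immediate: the spatial resolution $h^n$ is, by hypothesis, the length-scale of the largest leaf square, and $h^n\to0$ is assumed. The third bullet is also essentially immediate: $\epsilon^n = \bO(\sqrt{h^n}) \to 0$, and $h^n/\epsilon^n = \bO(\sqrt{h^n}) \to 0$. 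The fourth bullet, $d\theta^n\to0$ as $h^n\to0$, follows from $d\theta^n = \bO(\sqrt{h^n})$.

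The substantive work is the second bullet, $h_B^n/\delta^n \to 0$. Here I would argue as follows. Algorithm~\ref{alg:mesh_boundary} works square-by-square, and on each boundary leaf square it first computes the local gap $\delta$ between interior nodes in that square and the boundary arc, then sets the target boundary resolution to $h_B = 4\delta\tan(d\theta/2)$, and finally places $\lceil l/h_B\rceil$ points along the arc of length $l$. The actual achieved boundary resolution is therefore at most $h_B = 4\delta\tan(d\theta^n/2)$ (the ceiling only makes the spacing smaller). Hence $h_B^n/\delta^n \le 4\tan(d\theta^n/2)$, and since $d\theta^n = \bO(\sqrt{h^n}) \to 0$ we get $\tan(d\theta^n/2) \sim d\theta^n/2 \to 0$, so $h_B^n/\delta^n \to 0$ as required. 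One must be slightly careful about what ``$\delta^n$'' and ``$h_B^n$'' mean globally versus locally: the theorem's hypotheses can be taken to hold locally (as remarked after Theorem~\ref{thm:convergenceStencils} and at the end of \autoref{sec:meshes_boundary}), and the bound $h_B/\delta \le 4\tan(d\theta^n/2)$ holds on every boundary leaf square with the same controlling quantity $d\theta^n$, so the ratio tends to zero uniformly over the boundary. I would also note that $\delta^n > 0$ for each $n$: since each edge of a boundary leaf square meets $\partial\Omega$ at most once (the standing assumption) and interior nodes are vertices of leaf squares not lying on $\partial\Omega$, interior nodes stay a positive distance from the boundary.

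The main obstacle — really the only place requiring care rather than bookkeeping — is making the local-to-global reduction rigorous: confirming that the parameters $\epsilon^n$, $\delta^n$, $h_B^n$, $d\theta^n$ in Theorem~\ref{thm:convergenceStencils} may legitimately be interpreted as locally varying quantities, and that the convergence proof of that theorem goes through with local parameters satisfying the stated limits uniformly. Granting the remark in the paper that these conditions need only be checked locally, the argument reduces to the elementary inequality $h_B^n/\delta^n \le 4\tan(d\theta^n/2)$ on each boundary square together with the three trivial limit computations above, which together show $\G^n$ meets every hypothesis of Theorem~\ref{thm:convergenceStencils}.
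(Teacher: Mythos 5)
Your proof is correct and follows essentially the same route as the paper's: reduce to the single nontrivial bullet $h_B^n/\delta^n\to0$, define $h_B$ and $\delta$ locally on each boundary leaf square, and read off the bound $h_B/\delta=\bO(\tan(d\theta^n/2))=\bO(d\theta^n)\to0$ directly from the construction in Algorithm~\ref{alg:mesh_boundary}. The extra observations you make (the ceiling only tightens the spacing, $\delta^n>0$, and the explicit constant $4\tan(d\theta^n/2)$) are harmless elaborations of the same argument.
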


\begin{proof}
We need only verify that $h_B^n/\delta^n \to 0$; the remaining conditions of Theorem~\ref{thm:convergenceStencils} are trivially satisfied.  

We recall that both $h_B^n$ and $\delta^n$ can be defined locally.  Indeed, for each boundary leaf square $S$ we can let
	\[h_{B,S}^n = \sup\limits_{x\in{\partial\Omega}\cap S}\min\limits_{y\in\G^n\cap\partial\Omega \cap S}\abs{x-y}, \quad
	\delta_{S}^n = \min\limits_{x\in\Omega\cap\G^n}\inf\limits_{y\in\partial\Omega \cap S}\abs{x-y}.\]
By construction, Algorithm~\ref{alg:mesh_boundary} ensures that $h_{B,S}^n/\delta_S^n = \bO(d\theta_S^n) \to 0$.

Moreover, it is sufficient to verify these conditions at boundary leaf squares; other interior squares will produce larger values of $\delta^n_S$ and smaller values of $h_{B,S}^n/\delta^n_S$.
\end{proof}

\begin{figure}[h]
\centering
\subfigure[]{\includegraphics[width=\widthtwofigures]{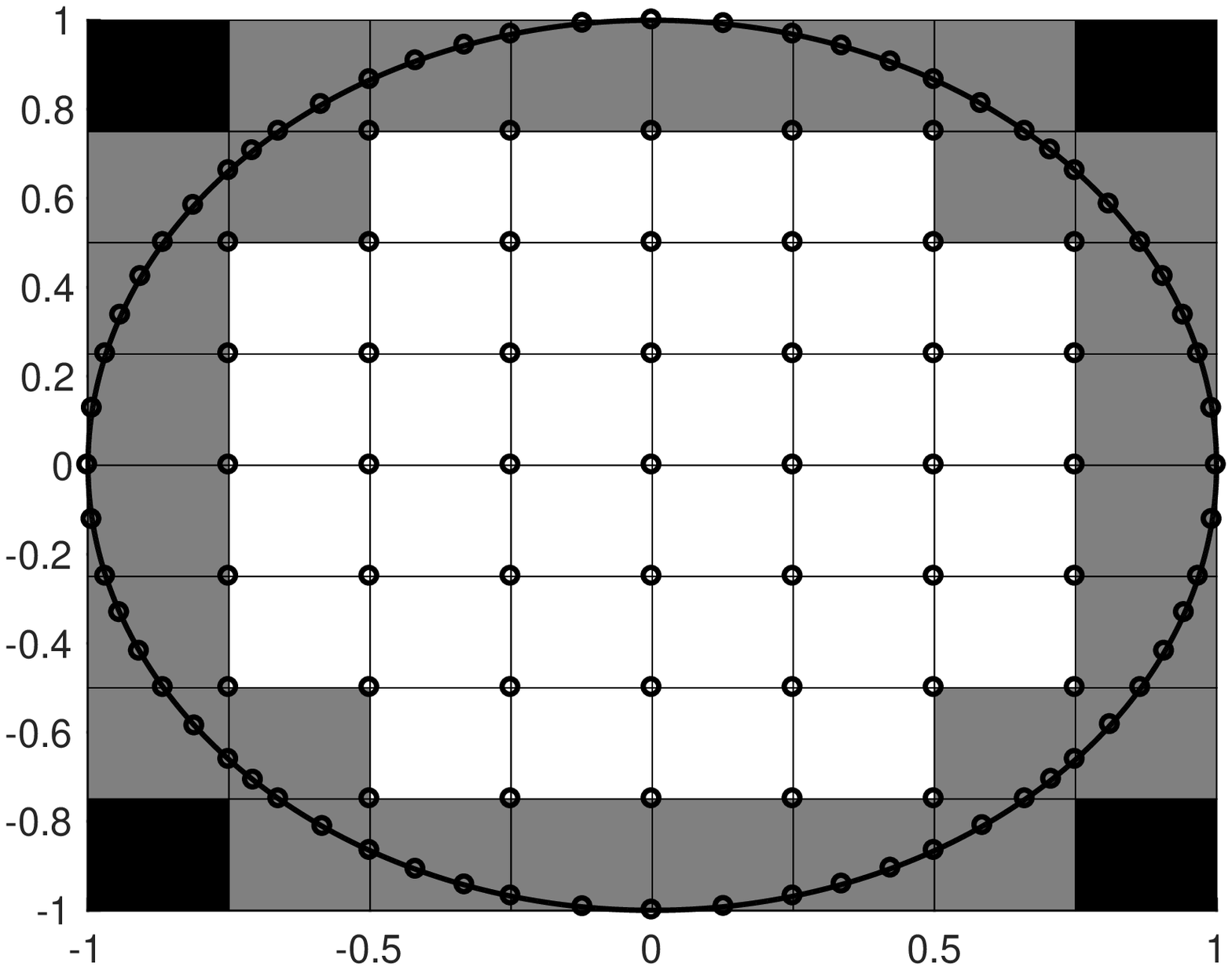}}\label{fig:UniformQuadtreeCircle}
\subfigure[]{\includegraphics[width=\widthtwofigures]{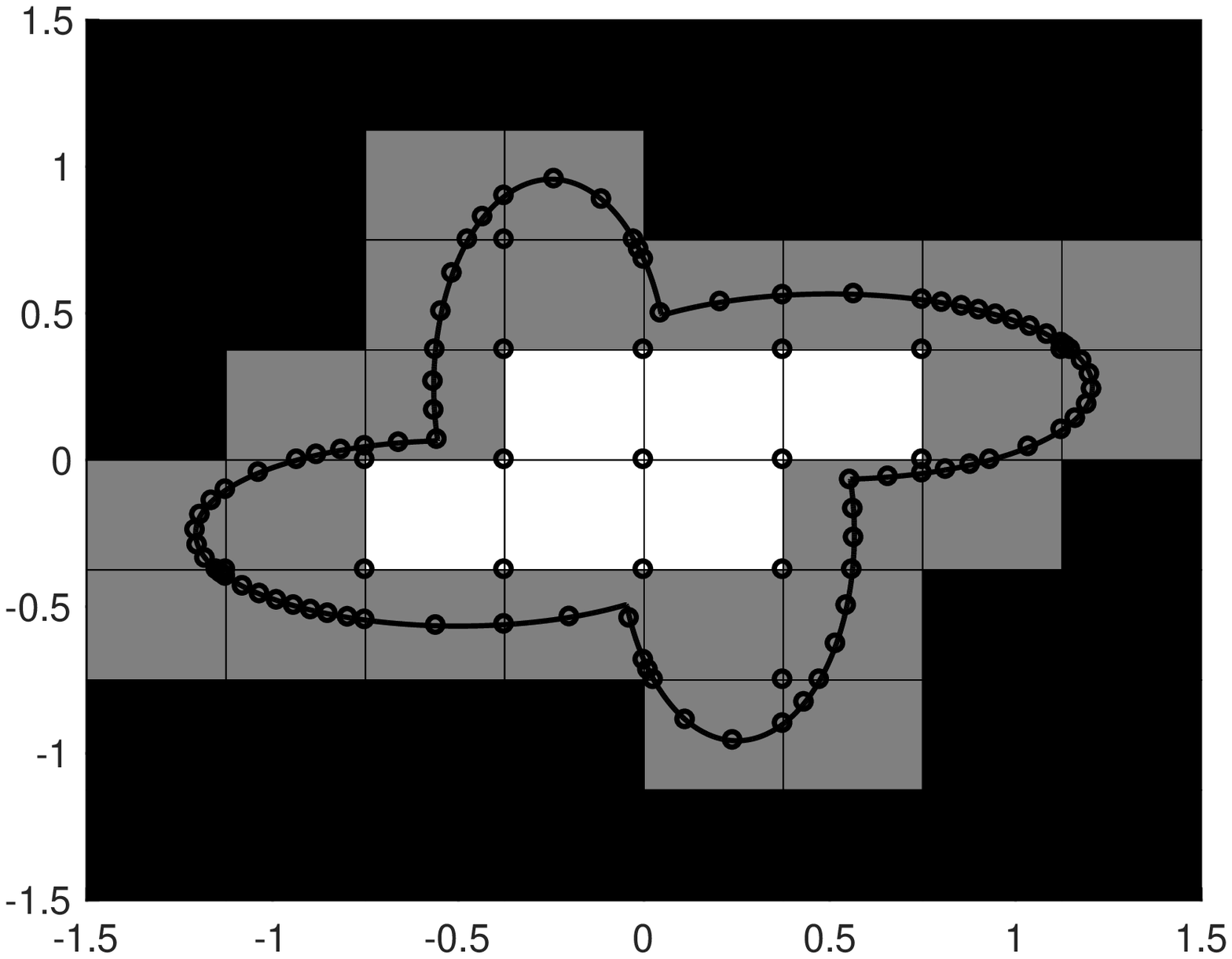}}\label{fig:UniformQuadtreeFan}
\caption{Black squares are part of the quadtree but not used since they are not inside the domain. Grey squares intersect the boundary. White squares are inside the domain.}
\label{fig:MeshesQuadtrees}
\end{figure}


\subsection{Refinement, adaptivity and balance}\label{sec:meshes_adaptivity}

The use of quadtrees also provides a natural means of doing mesh adaptation. A refinement criteria can either be specified \emph{a priori} or determined automatically from the quality of the solution.

In Figure \ref{fig:RefinementQuadtree}, we provide an example of \emph{a priori} refinement: the mesh is refined near the corners of the domain.

Simply refining the quadtree can lead to a very unbalanced quadtree when large squares adjoin several smaller squares. This is an undesirable property for our meshes as it makes the construction of high-order schemes significantly more difficult. Therefore, we always maintain a balanced quadtree: any two neighbouring squares differ by at most a factor of two in length scale (see Figure \ref{fig:RefinementQuadtree}). Balancing a quadtree can be done efficiently; we refer to~\cite[Theorem 14.4]{bookquadtree} for details. 

\begin{figure}[h]
\centering
\subfigure[]{\includegraphics[width=0.45\textwidth]{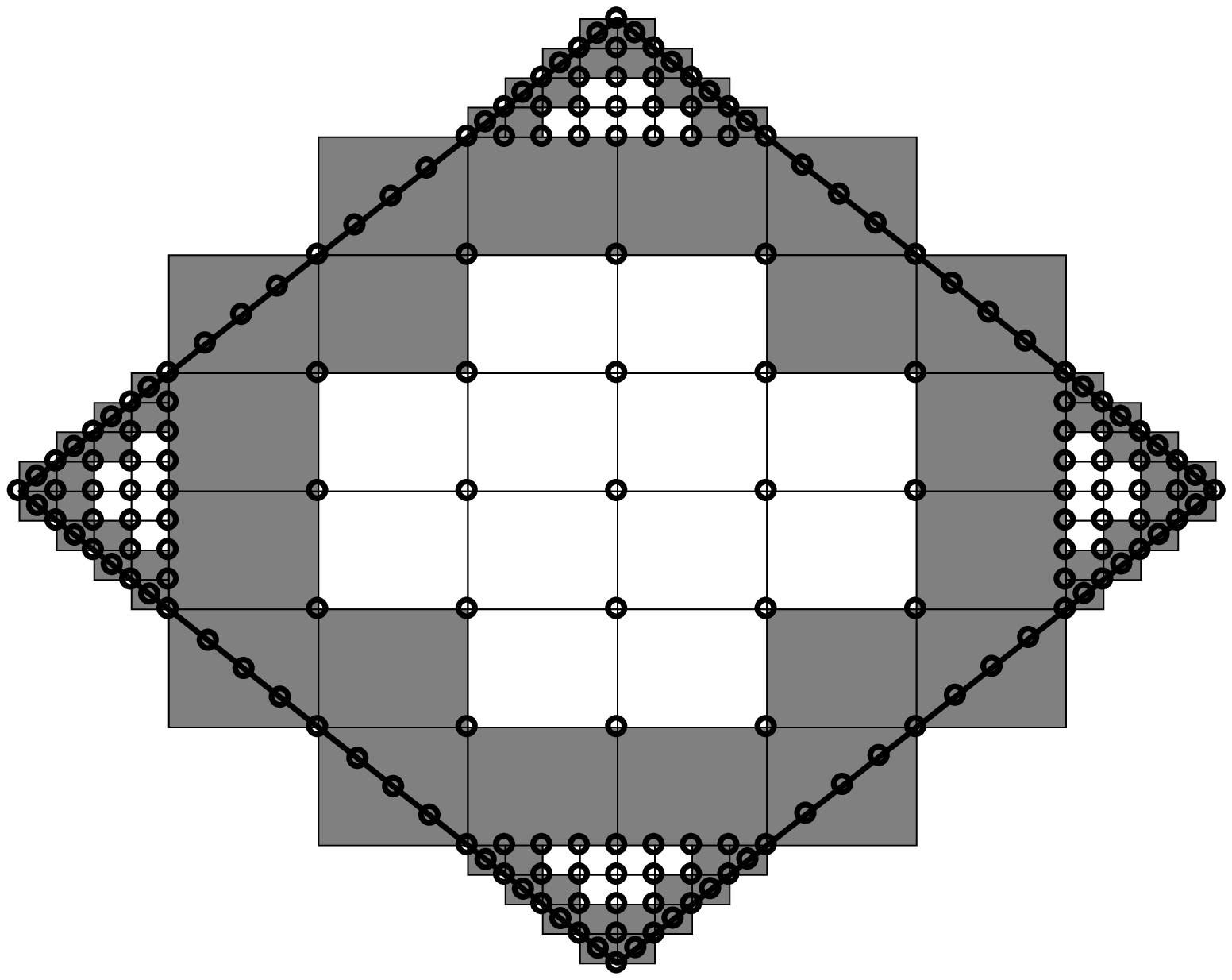}\label{fig:unbalanced}}
\hspace{0.1in}
\subfigure[]{\includegraphics[width=0.45\textwidth]{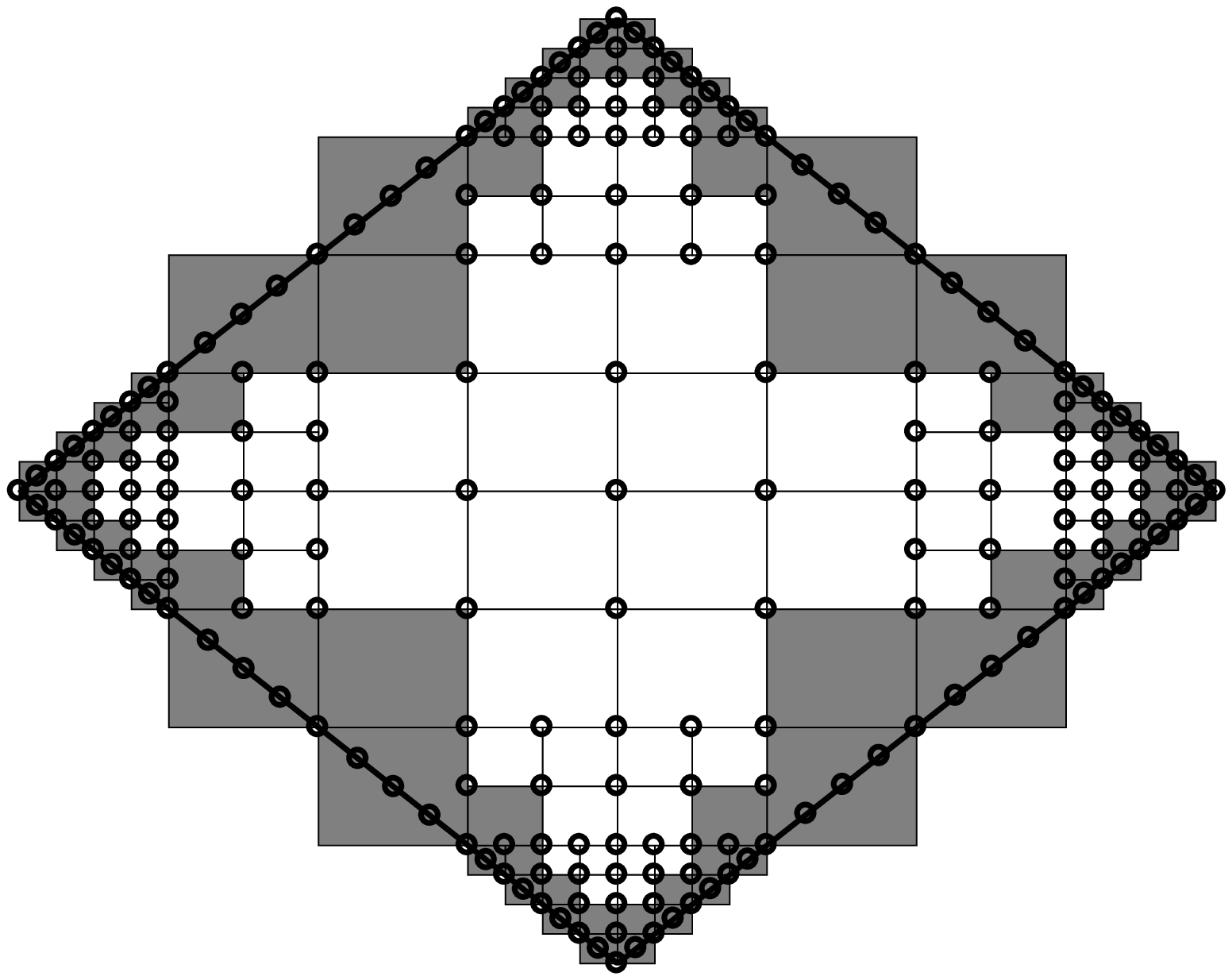}\label{fig:balanced}}
\caption{\emph{A priori} refinement near the corners of the domain: \subref{fig:unbalanced}~unbalanced quadtree and \subref{fig:balanced}~its balanced version.}
\label{fig:RefinementQuadtree}
\end{figure}

\subsection{Generating the stencil}\label{sec:meshes_stencilsearch}

We explain how quadtrees are used to efficiently find the neighbours for each interior mesh point. The main idea is the following: given the quadtree structure we know the relative position of the mesh points and can significantly restrict the number of nodes we examine.

Consider a direction $\nu = (\cos\theta,\sin\theta)$ and the line $x_0 + t\nu$. Without loss of generality, assume the line has positive slope as in Figure~\ref{fig:NeighboursSearch}.  We describe the procedure for finding the required neighbours of $x_0$ lying in the first and fourth quadrants.
\begin{algorithm}[h]
\caption{Finding the neighbours of $x_0 \in \G$ in the first and fourth quadrant.}
\label{alg:stencil_search}
\begin{algorithmic}[1]
\STATE Identify the leaf square that has $x_0$ as its southwest vertex.  This can be done efficiently since, when constructing the quadtree, a record is maintained of the (four) leaf squares that have each interior $x_0$ as a vertex.
\STATE Identify which edge(s) of this square intersect the line $x_0 + t\nu$, selecting the edge that yields the smaller value of $t$, $t_{min}$ (i.e. the first edge to intersect this ray).
\STATE Identify the neighbouring leaf squares that share this edge, selecting the one that intersects the line $x_0 + t\nu$ at $t = t_{min}$.
\STATE Identify the edge of this square that intersects the line $x_0 + t\nu$ at $t = t_{min}$.
\STATE Consider the two endpoints $y_1, y_2$ of this edge as potential neighbours, one lying in the first quadrant and one in the fourth quadrant.
\STATE Repeat steps 2-5, continually adding nodes to the list of potential neighbours, until the ray $x_0 + t\nu$ exits the search region ($t>\epsilon$) or we encounter a boundary leaf square.
\STATE If the procedure terminates at a boundary leaf square, add to the list of potential neighbours all boundary nodes associated with this square.
\end{algorithmic}
\end{algorithm}

From the list of potential neighbours in each quadrant, the precise neighbours used in the stencil are determined via~\eqref{eq:neighbours}. See Figure \ref{fig:NeighboursSearch}, for a close-up of the neighbours search in a uniform (left) and non-uniform (right) mesh.

Referring to Figure~\ref{fig:NeighboursSearch} (a), we provide a rough estimate on the improvement this algorithm yields for a uniform $N\times  N$ grid with grid spacing $h = \bO(1/N)$. Recall that the search region is a disc of radius $\epsilon$. The brute force algorithm used in~\cite{FroeseMeshfree} examines $\bO((\epsilon/h)^2)$ neighbours, while the algorithm proposed above using quadtrees examines only $\bO(\epsilon/h)$ neighbours. Given the typical choice $\epsilon = \bO(\sqrt{h})$, the cost of constructing the stencil at each point is reduced from $\bO(N)$ to $\bO(\sqrt{N})$. A similar speed-up is seen for the piecewise Cartesian meshes produced by the quadtree.


\begin{figure}[h]
\centering
\subfigure[]{\includegraphics[width=\widthtwofigures]{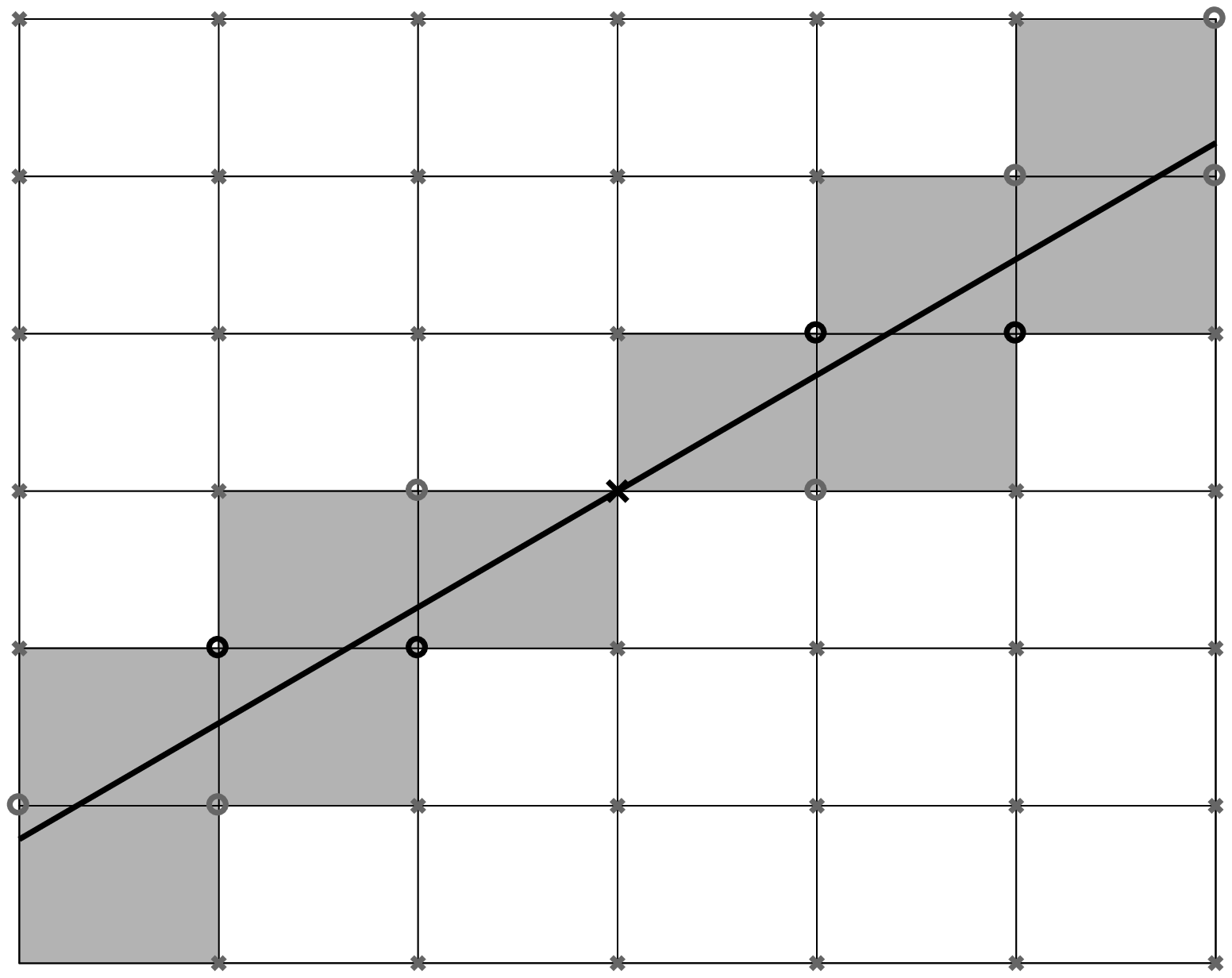}}\label{fig:UniformNeighboursSearch}
\subfigure[]{\includegraphics[width=\widthtwofigures]{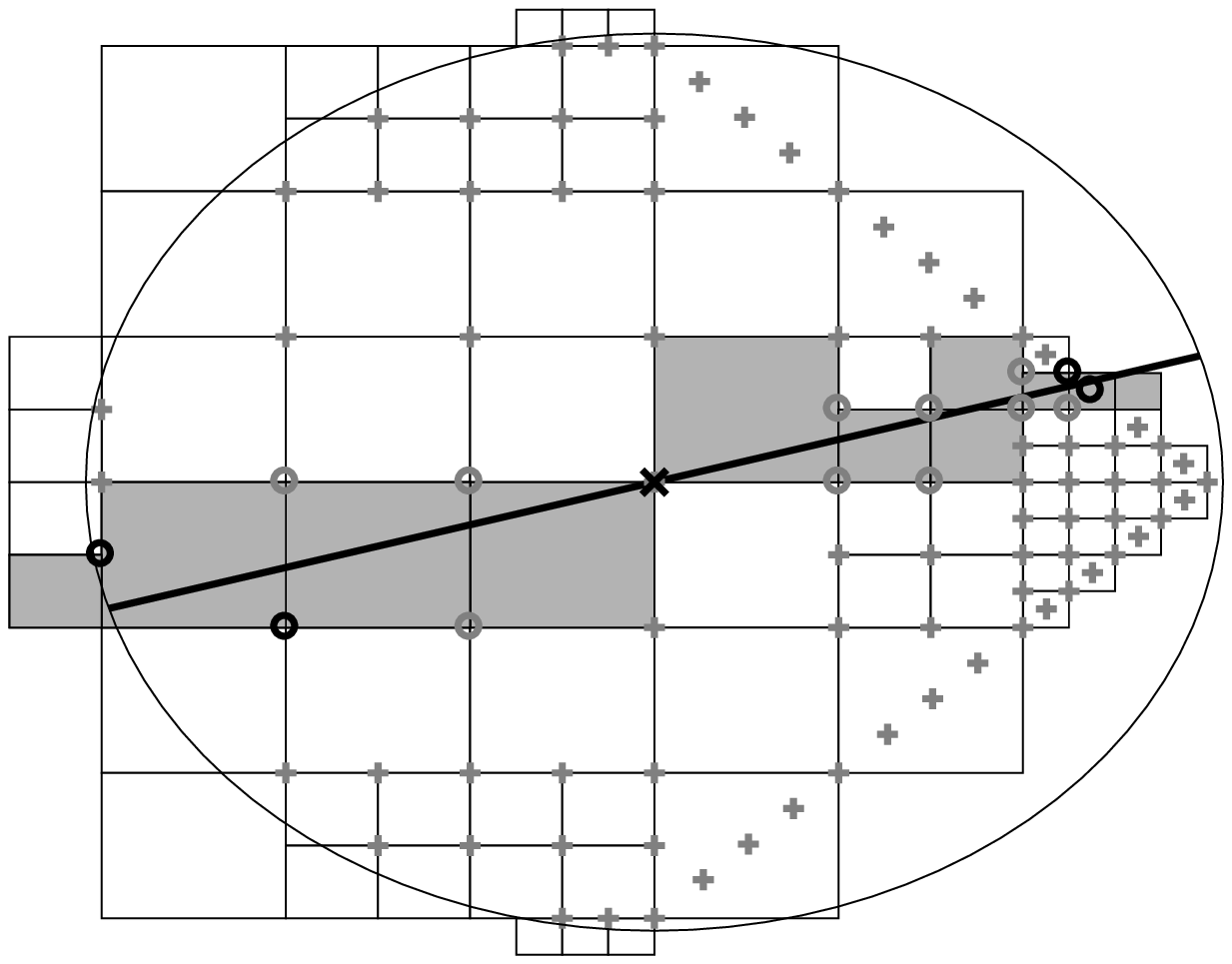}}\label{fig:RefinementNeighboursSearch}
\caption{Potential neighbours of $x_0 \in \G$ (black x-mark) as a result of Algorithm \ref{alg:stencil_search} are marked with a circle, with the selected neighbours in black. All remaining mesh points are marked with an x-mark. The grey squares are the ones considered in Algorithm~\ref{alg:stencil_search}.}
\label{fig:NeighboursSearch}
\end{figure}
\section{Higher-Order Methods}\label{sec:accurate}

We also introduce a technique for building formally higher-order approximations on highly non-uniform/unstructured grids.  We focus on second-order schemes, which is typically sufficient for applications, but the same ideas can be easily be extended to higher-order schemes.

\subsection{Filtered schemes}

The meshfree finite difference approximation discussed in \autoref{sec:schemes} is low accuracy; formally it is at best $\bO(\sqrt{h})$. However it can be used as the foundation for higher-order convergent filtered schemes as in~\cite{FOFiltered}. The main idea is to blend a monotone convergent scheme with a non-monotone accurate scheme and retain the advantages of both: stability and convergence of the former, and higher accuracy of the latter. 

To accomplish this, we let $F_A[u]$ be any higher-order scheme and $F_M[u]$ be a monotone approximation scheme, both defined on the same mesh. The filtered scheme is then defined as
\[
F_F[u] = F_M[u] + h^\alpha S\left(\frac{F_A[u]-F_M[u]}{h^\alpha}\right),
\]
where the filter S is given by
\[
S(x) = \begin{cases}
x,		& \abs{x} \leq 1,\\
0,		& \abs{x} \geq 2,\\
-x+2,	& 1 \leq x \leq 2,\\
-x-2,	& -2 \leq x \leq -1.
\end{cases}
\]
As long as $\alpha>0$, this approximation converges to the viscosity solution of the PDE under the same conditions as the monotone scheme converges. The underlying reason is that this scheme is a small perturbation of a monotone scheme and  the proof of the Barles-Souganidis theorem is easily modified to accommodate this. Moreover, if $h^\alpha$ is larger than the discretisation error of the monotone scheme, the formal accuracy of the filtered scheme is the same as the formal accuracy of the non-monotone scheme.

\subsection{Higher-order schemes in interior}

We discuss how to build high-order schemes for the non-uniform meshes proposed in \autoref{sec:meshes}. In this section, we focus on interior mesh points away from the boundary.

Defining higher-order schemes for \eqref{eq:PDE} reduces to defining higher-order approximations to $u_{xx}$, $u_{yy}$ and $u_{xy}$. We will focus on building second order approximations, although the ideas are easily generalised. For a uniform Cartesian grid, such as in Figure \ref{fig:StencilAccurateUniform}, these are widely known and are given by
\begin{align*}
u_{xx} & \approx \frac{u_W+u_E - 2u}{h^2},\\
u_{yy} & \approx \frac{u_N+u_S - 2u}{h^2},\\
u_{xy} & \approx \frac{u_{NE}+u_{SW}-u_{NW}-u_{SE}}{4h^2}.
\end{align*}

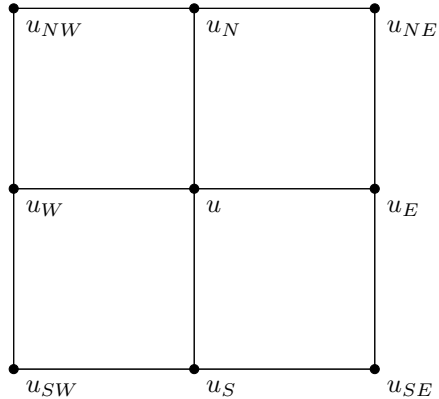
\begin{figure}[h]
\centering
\tikzset{
  latentnode/.style={fill,draw, minimum width=1pt, shape=circle, black}}
\begin{tikzpicture}[scale=0.6]
\draw (0,0) rectangle (8,8); 
\draw (0,4) rectangle (4,8); 
\draw (4,4) rectangle (8,8); 
\draw (0,0) rectangle (4,4); 
\draw (4,0) rectangle (8,4); 
\draw[fill] (0,0) circle (1mm) node[below right=1pt]{$u_{SW}$};
\draw[fill] (4,0) circle (1mm) node[below right=1pt]{$u_{S}$};
\draw[fill] (8,0) circle (1mm) node[below right=1pt]{$u_{SE}$};
\draw[fill] (0,4) circle (1mm) node[below right=1pt]{$u_{W}$};
\draw[fill] (4,4) circle (1mm) node[below right=1pt]{$u$};
\draw[fill] (8,4) circle (1mm) node[below right=1pt]{$u_{E}$};
\draw[fill] (0,8) circle (1mm) node[below right=1pt]{$u_{NW}$};
\draw[fill] (4,8) circle (1mm) node[below right=1pt]{$u_N$};
\draw[fill] (8,8) circle (1mm) node[below right=1pt]{$u_{NE}$};
\end{tikzpicture}
\caption{A regular node and respective stencil for a uniform Cartesian grid.}
\label{fig:StencilAccurateUniform}
\end{figure}

In a uniform cartesian grid, all the nodes are regular nodes; i.e., each node is the vertex of four different squares like the one depicted in Figure \ref{fig:StencilAccurateUniform}. However, the meshes proposed here are non-uniform and in general not all nodes will be regular nodes.  We need also consider dangling nodes, which occur midway along the shared edge of two equally-sized squares, one of which is subdivided. Thus additional work is required to define the higher-order schemes.

As explained in \autoref{sec:meshes}, the meshes are generated using quadtrees that are kept balanced (the lengths of neighbouring squares differ by at most a factor of two). Thus each interior mesh point can be associated to one of five different configurations. These are depicted in Figures \ref{fig:StencilAccurateUniform}, \ref{fig:StencilDanglingNode1}, and \ref{fig:StencilDanglingNode2}. The generic element chosen to represent each configuration is one where each square is a leaf square. In general, one or more of the smaller squares may have children in the quadtree; i.e., they may be subdivided into smaller squares. We consider these to be redundant when constructing the high-order schemes.  Considering all possible different configurations would only increase the complexity of the schemes with no additional benefits as the schemes would remain asymptotically second order; only the asymptotic error constant could be improved. 

For the configurations in Figure \ref{fig:StencilDanglingNode1}, we use the following approximations
\begin{align*}
u_{xx} & \approx \frac{-2u_N-2u_S+u_{NW}+u_{NE}+u_{SE}+u_{SW}}{2h^2},\\
u_{yy} & \approx 4\frac{u_N+u_S - 2u}{h^2},\\
u_{xy} & \approx \frac{u_{NE}+u_{SW}-u_{NW}-u_{SE}}{2h^2}.
\end{align*}
As for the configurations in Figure \ref{fig:StencilDanglingNode2}, we have
\begin{align*}
u_{xx} & \approx 4\frac{u_W+u_E - 2u}{h^2},\\
u_{yy} & \approx \frac{-2u_W-2u_E+u_{NW}+u_{NE}+u_{SE}+u_{SW}}{2h^2},\\
u_{xy} & \approx \frac{u_{NE}+u_{SW}-u_{NW}-u_{SE}}{2h^2}.
\end{align*}
The standard Taylor expansion argument shows that the above expressions are second order accurate.

\twocolumnsfigures{
\begin{tabular}{cc}
\begin{tikzpicture}[scale=0.5]
\draw (0,0) rectangle (4,4);
\draw (4,0) rectangle (6,2);
\draw (4,2) rectangle (6,4);
\draw (6,0) rectangle (8,2);
\draw (6,2) rectangle (8,4);
\draw[fill] (0,0) circle (1mm) node[below right=.1pt]{$u_{SW}$};
\draw[fill] (4,0) circle (1mm) node[below right=.1pt]{$u_{S}$};
\draw[fill] (6,0) circle (1mm);
\draw[fill] (8,0) circle (1mm) node[below right=.1pt]{$u_{SE}$};
\draw[fill] (4,2) circle (1mm) node[below right=.1pt]{$u$};
\draw[fill] (6,2) circle (1mm);
\draw[fill] (8,2) circle (1mm);
\draw[fill] (0,4) circle (1mm) node[below right=.1pt]{$u_{NW}$};
\draw[fill] (4,4) circle (1mm) node[below right=.1pt]{$u_N$};
\draw[fill] (6,4) circle (1mm);
\draw[fill] (8,4) circle (1mm) node[below right=.1pt]{$u_{NE}$};
\end{tikzpicture}\\
\begin{tikzpicture}[scale=0.5]
\draw (4,0) rectangle (8,4);
\draw (0,0) rectangle (2,2);
\draw (0,2) rectangle (2,4);
\draw (2,0) rectangle (4,2);
\draw (2,2) rectangle (4,4);
\draw[fill] (0,0) circle (1mm) node[below right=.1pt]{$u_{SW}$};
\draw[fill] (4,0) circle (1mm) node[below right=.1pt]{$u_{S}$};
\draw[fill] (2,0) circle (1mm);
\draw[fill] (8,0) circle (1mm) node[below right=.1pt]{$u_{SE}$};
\draw[fill] (0,2) circle (1mm);
\draw[fill] (2,2) circle (1mm);
\draw[fill] (4,2) circle (1mm) node[below right=.1pt]{$u$};
\draw[fill] (0,4) circle (1mm) node[below right=.1pt]{$u_{NW}$};
\draw[fill] (4,4) circle (1mm) node[below right=.1pt]{$u_N$};
\draw[fill] (2,4) circle (1mm);
\draw[fill] (8,4) circle (1mm) node[below right=.1pt]{$u_{NE}$};
\end{tikzpicture}
\end{tabular}
\captionsetup{width=\textwidth}
\caption{A dangling node in the $x$ variable and respective stencil for the higher-order scheme.}
\label{fig:StencilDanglingNode1}
}{
\begin{tabular}{cc}
\begin{tikzpicture}[scale=0.5]
\draw (0,0) rectangle (4,4);
\draw (0,6) rectangle (2,8);
\draw (2,6) rectangle (4,8);
\draw (0,4) rectangle (2,6);
\draw (2,4) rectangle (4,6);
\draw[fill] (0,0) circle (1mm) node[below right=.1pt]{$u_{SW}$};
\draw[fill] (4,0) circle (1mm) node[below right=.1pt]{$u_{SE}$};
\draw[fill] (0,4) circle (1mm) node[below right=.1pt]{$u_{W}$};
\draw[fill] (2,4) circle (1mm) node[below right=.1pt]{$u$};
\draw[fill] (4,4) circle (1mm) node[below right=.1pt]{$u_{E}$};
\draw[fill] (0,6) circle (1mm);
\draw[fill] (2,6) circle (1mm);
\draw[fill] (4,6) circle (1mm);
\draw[fill] (0,8) circle (1mm) node[below right=.1pt]{$u_{NW}$};
\draw[fill] (2,8) circle (1mm);
\draw[fill] (4,8) circle (1mm) node[below right=.1pt]{$u_{NE}$};
\end{tikzpicture} & 
\begin{tikzpicture}[scale=0.5]
\draw (0,2) rectangle (2,4);
\draw (2,2) rectangle (4,4);
\draw (0,0) rectangle (2,2);
\draw (2,0) rectangle (4,2);
\draw (0,4) rectangle (4,8);
\draw[fill] (0,0) circle (1mm) node[below right=.1pt]{$u_{SW}$};
\draw[fill] (2,0) circle (1mm);
\draw[fill] (4,0) circle (1mm) node[below right=.1pt]{$u_{SE}$};
\draw[fill] (0,2) circle (1mm);
\draw[fill] (2,2) circle (1mm);
\draw[fill] (4,2) circle (1mm);
\draw[fill] (0,4) circle (1mm) node[below right=.1pt]{$u_{W}$};
\draw[fill] (2,4) circle (1mm) node[below right=.1pt]{$u$};
\draw[fill] (4,4) circle (1mm) node[below right=.1pt]{$u_{E}$};
\draw[fill] (0,8) circle (1mm) node[below right=.1pt]{$u_{NW}$};
\draw[fill] (4,8) circle (1mm) node[below right=.1pt]{$u_{NE}$};
\end{tikzpicture}
\end{tabular}
\captionsetup{width=\textwidth}
\caption{A dangling node in the $y$ variable and respective stencil for the higher-order scheme.}
\label{fig:StencilDanglingNode2}
}

Finally, we explain how one can efficiently determine the configuration of each interior mesh point. As mentioned in \autoref{sec:meshes_stencilsearch}, for each interior mesh point a record is kept of the four leaf squares that the interior mesh point as a vertex. Thus the configuration is easily determined by determining the depth of the neighbouring squares and respective parent squares in the quadtree.

\begin{figure}
	
\end{figure}

\subsection{Least-squares constructions near boundary}

In this section, we discuss how to construct second order schemes at interior points near the boundary. When near the boundary, the construction of the schemes cannot reduce to the cases considered in the previous section: in general, not all the neighbouring mesh points will be the vertices of squares and some will lie on the boundary, which we allow to have a complicated geometry. Thus additional care is needed.  Here we describe a general strategy for building high-order schemes.

Let $\{x_i\}_{i=1}^N$ denote neighbouring mesh points to the interior mesh point $x_0$ with $\Norm{x_i-x_0}_\infty = \bO(h)$. Using Taylor expansion we obtain, for each $i=1,\ldots,N$,
\[
u(x_i) - u(x_0) = \sum_{0 < \abs{\alpha}\leq 3} \frac{(x_i-x_0)^\alpha}{\alpha!} (\partial^\alpha u)(x_0) + \bO(h^4),
\]
where we are using the multi-index notation. Hence
\[
\sum_{i=1}^N a_i(u(x_i)-u(x_0)) = \partial^\beta u(x_0) + \bO(h^2)
\]
if the $\{a_i\}_{i=1}^N$ solve the linear system
\[
\sum_{i=1}^N \frac{(x_i-x_0)^\alpha}{\alpha!}a_i = \mathbb{1}_{\{\alpha = \beta\}}
\]
for $0 < \abs{\alpha}\leq 3$.

To approximate second derivatives to second order, we expect to require $N=9$ neighbours.

Designing second order schemes is now reduced to determining the neighbouring mesh points and solving the respective linear system. However, since we are particularly interested in the case where some of the neighbouring mesh points lie on the boundary, which may have a complicated geometry, it is hard to make any \emph{a priori} claim regarding the invertability and conditioning of the linear system. It is important to point out that we are interested in obtaining any particular solution. As we saw in the previous section, depending on the derivative being approximated and the location of the neighbouring mesh points, the number of neighbouring mesh points required changes.

We are now ready to describe the strategy implemented to construct the higher-order schemes. First, we determine which configuration we are in. If all the vertices of squares neighbouring $x_0$ are mesh points, we use the approximations described in the previous section. Otherwise, we build the linear system above using all mesh points $x_i$ that lie within squares adjoining $x_0$ (some will lie on the boundary of the domain and will not be vertices of the squares). In general, we will have $N \geq 9$, but the linear system may still have no solution or be ill-conditioned. If that is the case, we consider additional neighbouring mesh points by adding the mesh points that belong to neighbouring squares. In general, we end up with an under-determined system and we select the least squares solution. In practice, adding additional neighbouring mesh points was not always required, and it was never required more than once. Thus the high-order scheme has stencil width $\bO(h)$ and preserves the formal discretisation error of $\bO(h^2)$.

\section{Computational Examples}\label{sec:examples}

\subsection{Monge-Amp\`ere equation}

We consider the Monge-Amp\`ere equation
\[
\begin{cases}
-\det(D^2u)+f = 0,	& x \in \Omega\\
u = g,				& x \in \partial \Omega\\
\text{$u$ is convex.}
\end{cases}
\]
The PDE is only elliptic in the space of convex functions. However, as in \cite{FroeseTransport}, we can use the globally elliptic extension
\[\hspace{-5ex}
-\min_{\theta \in [0,\pi/2)} \left\{\max\left\{\frac{\partial^2 u}{\partial e_\theta^2},0\right\}\max\left\{\frac{\partial^2 u}{\partial e_{\theta+\pi/2}^2},0\right\} + \min\left\{\frac{\partial^2 u}{\partial e_\theta^2},0\right\} + \max\left\{\frac{\partial^2 u}{\partial e_{\theta+\pi/2}^2},0\right\} \right\} + f = 0.
\]

We will consider four different domains given by $\Omega = \{(x,y)\in\R^2\mid \phi(x,y) < 0\}$ where $\phi$ is given by
\begin{enumerate}[(a)]\setlength\itemsep{0.8em}
	\item (circle) $\phi(x,y) = x^2+y^2-1$,
	\item (ellipse) $\phi(x,y) = x^2+2y^2-1$,
	\item (diamond) $\phi(x,y) = |x|+|y|-1$,
	\item (diamond stretched) $\phi(x,y) = |x|+|2y|-1$.
\end{enumerate}

\begin{example}\label{ex:MA_C2}
We consider first the following $C^2$ solution of the Monge-Amp\`ere equation
\[
u(x,y) = e^{\frac{x^2+y^2}{2}}, \quad f(x,y) = (1+x^2+y^2)e^{x^2+y^2}.
\]
\end{example}

Results are displayed in Table~\ref{table:ExampleMA_C2}.  On each domain, the filtered implementation recovers the desired second-order accuracy even though the boundary nodes do not belong to the structured piecewise Cartesian mesh.

\begin{table}[htp]
\centering
\footnotesize
\begin{tabular}{cccccc}
\hline
	& \multicolumn{5}{l}{Errors and order, Example \ref{ex:MA_C2} (circle)} \\ \cline{2-6}
$N$	& $h$		& \multicolumn{2}{c}{Monotone}	& \multicolumn{2}{c}{Filtered}\\
\hline
101 & \num{2.500e-01} & \num{1.739e-02} & - & \num{7.008e-03} & - \\
349 & \num{1.250e-01} & \num{5.690e-03} & 1.61 & \num{2.219e-03} & 1.66 \\
1137 & \num{6.250e-02} & \num{2.780e-03} & 1.03 & \num{6.184e-04} & 1.84 \\
4289 & \num{3.125e-02} & \num{1.876e-03} & 0.57 & \num{1.708e-04} & 1.86 \\
15685 & \num{1.562e-02} & \num{1.630e-03} & 0.20 & \num{4.320e-05} & 1.98 \\
58449 & \num{7.812e-03} & \num{1.567e-03} & 0.06 & \num{1.082e-05} & 2.00 \\
	& \multicolumn{5}{l}{Errors and order, Example \ref{ex:MA_C2} (ellipse)} \\ \cline{2-6}
$N$	& $h$		& \multicolumn{2}{c}{Monotone}	& \multicolumn{2}{c}{Filtered}\\
\hline
79 & \num{2.500e-01} & \num{9.467e-03} & - & \num{8.884e-03} & - \\
257 & \num{1.250e-01} & \num{3.026e-03} & 1.65 & \num{1.233e-03} & 2.85 \\
893 & \num{6.250e-02} & \num{1.262e-03} & 1.26 & \num{3.628e-04} & 1.76 \\
3143 & \num{3.125e-02} & \num{8.277e-04} & 0.61 & \num{1.012e-04} & 1.84 \\
11257 & \num{1.562e-02} & \num{7.443e-04} & 0.15 & \num{2.602e-05} & 1.96 \\
42863 & \num{7.812e-03} & \num{7.238e-04} & 0.04 & \num{6.512e-06} & 2.00 \\
	& \multicolumn{5}{l}{Errors and order, Example \ref{ex:MA_C2} (diamond)} \\ \cline{2-6}
$N$	& $h$		& \multicolumn{2}{c}{Monotone}	& \multicolumn{2}{c}{Filtered}\\
\hline
57 & \num{2.500e-01} & \num{6.162e-03} & - & \num{7.244e-03} & -\\
177 & \num{1.250e-01} & \num{2.332e-03} & 1.40 & \num{1.478e-03} & 2.29 \\
673 & \num{6.250e-02} & \num{9.716e-04} & 1.26 & \num{3.561e-04} & 2.05 \\
2369 & \num{3.125e-02} & \num{3.413e-04} & 1.51 & \num{8.881e-05} & 2.00 \\
9089 & \num{1.562e-02} & \num{2.771e-04} & 0.30 & \num{2.218e-05} & 2.00 \\
35073 & \num{7.812e-03} & \num{1.878e-04} & 0.56 & \num{5.544e-06} & 2.00 \\
	& \multicolumn{5}{l}{Errors and order, Example \ref{ex:MA_C2} (diamond stretched)} \\ \cline{2-6}
$N$	& $h$		& \multicolumn{2}{c}{Monotone}	& \multicolumn{2}{c}{Filtered}\\
\hline
61 & \num{2.500e-01} & \num{2.535e-03} & - & \num{1.888e-03} & - \\
153 & \num{1.250e-01} & \num{7.891e-04} & 1.68 & \num{5.212e-04} & 1.86 \\
497 & \num{6.250e-02} & \num{2.802e-04} & 1.49 & \num{1.356e-04} & 1.94 \\
1633 & \num{3.125e-02} & \num{1.516e-04} & 0.89 & \num{3.428e-05} & 1.98 \\
5569 & \num{1.562e-02} & \num{7.684e-05} & 0.98 & \num{8.597e-06} & 2.00 \\
20353 & \num{7.812e-03} & \num{2.357e-05} & 1.70 & \num{2.151e-06} & 2.00 \\
\end{tabular}
\caption{Convergence results for the $C^2$ solution of the Monge-Amp\`ere
equation.}
\label{table:ExampleMA_C2}
\end{table}

\begin{example}\label{ex:MA_C1}
We consider also a $C^1$ solution of the Monge-Amp\`ere equation, for which the ellipticity is degenerate in an open set.
\[
u(x,y) = \frac{1}{2}\max\{\sqrt{x.^2+y.^2}-0.2,0\}^2, \quad f(x,y) = \max\{1-\frac{0.2}{\sqrt{x^2+y^2)}},0\}.
\]
\end{example}

This solution is singular, and there is thus no realistic hope of attaining the formal second-order discretisation error obtained from Taylor's Theorem.  Nevertheless, the method converges and we observe roughly first-order accuracy (Table~\ref{table:ExampleMA_C1}).

\begin{table}[htp]
\centering
\footnotesize
\begin{tabular}{cccccc}
\hline
	& \multicolumn{5}{l}{Errors and order, Example \ref{ex:MA_C1} (circle)} \\ \cline{2-6}
$N$	& $h$		& \multicolumn{2}{c}{Monotone}	& \multicolumn{2}{c}{Filtered}\\
\hline
101 & \num{2.500e-01} & \num{1.606e-02} & - & \num{4.142e-03} & - \\
349 & \num{1.250e-01} & \num{9.106e-03} & 0.82 & \num{2.384e-03} & 0.80 \\
1137 & \num{6.250e-02} & \num{5.706e-03} & 0.67 & \num{1.763e-03} & 0.44 \\
4289 & \num{3.125e-02} & \num{4.827e-03} & 0.24 & \num{7.871e-04} & 1.16 \\
15685 & \num{1.562e-02} & \num{4.330e-03} & 0.16 & \num{3.674e-04} & 1.10 \\
58449 & \num{7.812e-03} & \num{4.300e-03} & 0.01 & \num{1.839e-04} & 1.00 \\
	& \multicolumn{5}{l}{Errors and order, Example \ref{ex:MA_C1} (ellipse)} \\ \cline{2-6}
$N$	& $h$		& \multicolumn{2}{c}{Monotone}	& \multicolumn{2}{c}{Filtered}\\
\hline
79 & \num{2.500e-01} & \num{1.193e-02} & - & \num{4.276e-03} & - \\
257 & \num{1.250e-01} & \num{7.126e-03} & 0.74 & \num{3.201e-03} & 0.42 \\
893 & \num{6.250e-02} & \num{4.569e-03} & 0.64 & \num{1.579e-03} & 1.02 \\
3143 & \num{3.125e-02} & \num{3.924e-03} & 0.22 & \num{8.118e-04} & 0.96 \\
11257 & \num{1.562e-02} & \num{3.519e-03} & 0.16 & \num{3.764e-04} & 1.11 \\
42863 & \num{7.812e-03} & \num{3.497e-03} & 0.01 & \num{2.224e-04} & 0.76 \\
	& \multicolumn{5}{l}{Errors and order, Example \ref{ex:MA_C1} (diamond)} \\ \cline{2-6}
$N$	& $h$		& \multicolumn{2}{c}{Monotone}	& \multicolumn{2}{c}{Filtered}\\
\hline
57 & \num{2.500e-01} & \num{8.503e-03} & - & \num{6.576e-03} & - \\
177 & \num{1.250e-01} & \num{6.670e-03} & 0.35 & \num{2.155e-03} & 1.61 \\
673 & \num{6.250e-02} & \num{4.265e-03} & 0.65 & \num{1.555e-03} & 0.47 \\
2369 & \num{3.125e-02} & \num{3.442e-03} & 0.31 & \num{7.661e-04} & 1.02 \\
9089 & \num{1.562e-02} & \num{2.303e-03} & 0.58 & \num{3.597e-04} & 1.09 \\
35073 & \num{7.812e-03} & \num{1.109e-03} & 1.05 & \num{2.142e-04} & 0.75 \\
	& \multicolumn{5}{l}{Errors and order, Example \ref{ex:MA_C1} (diamond stretched)} \\ \cline{2-6}
$N$	& $h$		& \multicolumn{2}{c}{Monotone}	& \multicolumn{2}{c}{Filtered}\\
\hline
61 & \num{2.500e-01} & \num{2.653e-03} & - & \num{4.060e-03} & -\\
153 & \num{1.250e-01} & \num{3.780e-03} & -0.51 & \num{3.026e-03} & 0.42 \\
497 & \num{6.250e-02} & \num{2.426e-03} & 0.64 & \num{1.213e-03} & 1.32 \\
1633 & \num{3.125e-02} & \num{1.820e-03} & 0.41 & \num{1.228e-03} & -0.02 \\
5569 & \num{1.562e-02} & \num{1.203e-03} & 0.60 & \num{3.726e-04} & 1.72 \\
20353 & \num{7.812e-03} & \num{6.089e-04} & 0.98 & \num{2.015e-04} & 0.89 \\
\end{tabular}
\caption{Convergence results for the $C^1$ solution of the Monge-Amp\`ere
equation.}
\label{table:ExampleMA_C1}
\end{table}

\subsection{Computation time}

One of the reasons to use quadtrees to build non-uniform meshes was to efficiently find the neighbours for the finite difference schemes. Here we compare the implementation with quadtrees, discussed in depth in \autoref{sec:meshes}, with the simple but inefficient brute force approach of~\cite{FroeseMeshfree}.  Reduced CPU time is the ultimate goal, but not necessarily a robust measure of efficiency as it depends on both hardware and software implementations. Here both implementations were vectorised whenever possible in order to optimise the code for MATLAB. Although it is still possible to further optimise the code, the CPU time should  be a fair indication of the improvement gained by using quadtrees.

In Figure~\ref{fig:TimeComparison}, we compare the number of mesh points versus the CPU time required to generate the mesh and find the stencil. We let the domain $\Omega$ be the ellipse given by $\Omega = \{(x,y)\in\R^2\mid x^2+2y^2 < 1\}$. The results indicate that our new approach is optimal, with the computation time required to construct the stencils being roughly proportional to the number of mesh points.  This represents an improvement of roughly one order over the original brute force approach. In practice, we can generate a mesh and find the stencil based on a uniform $256 \times 256$ grid in roughly $50$ seconds with the use of quadtrees, instead of over $6$ minutes with the previous brute force approach.

\begin{figure}[h]
\centering
\begin{tabular}{cc}
\includegraphics[width=\widthtwofigures]{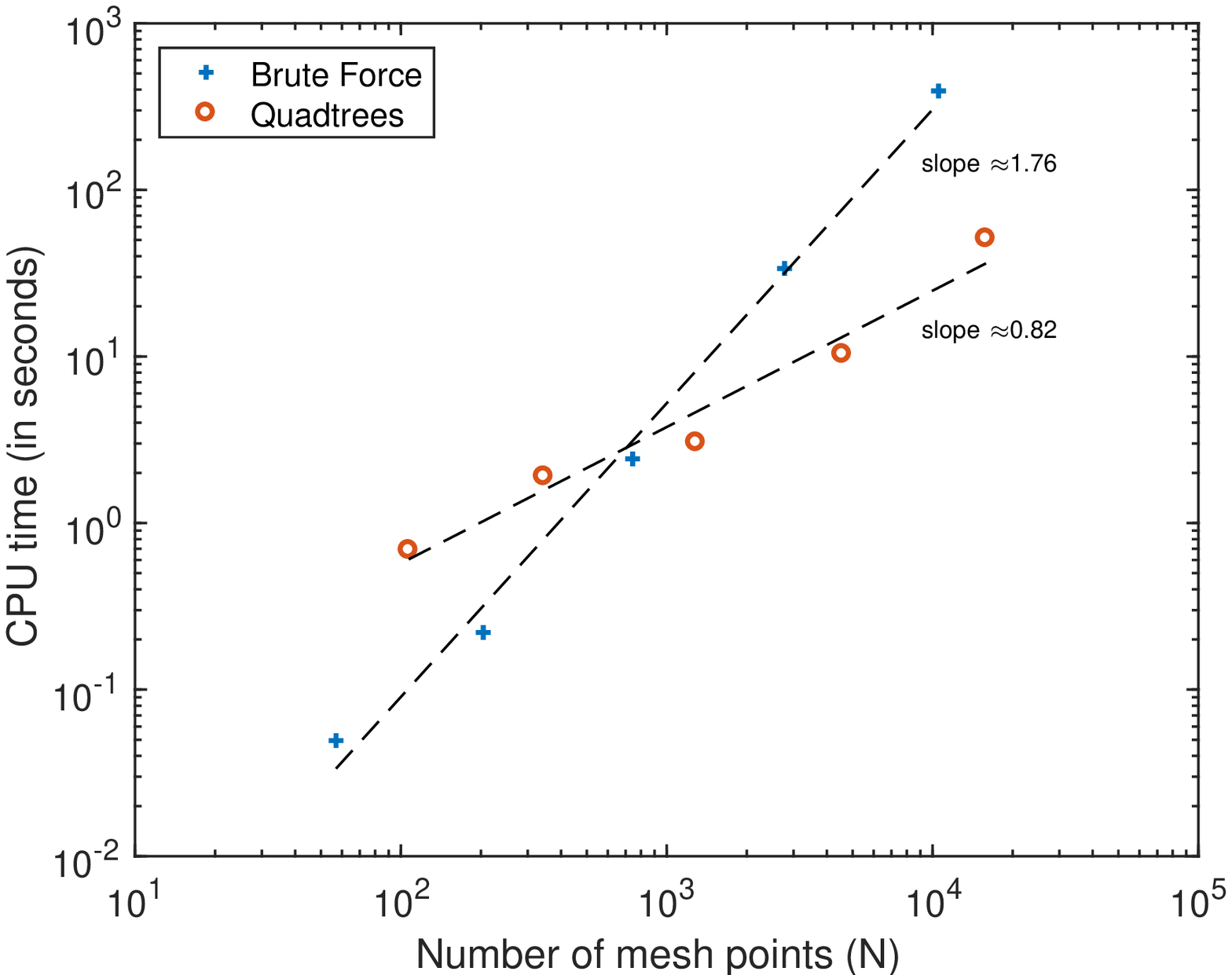}
\end{tabular}
\caption{Number of mesh points vs CPU time in seconds to generate a mesh and find the respective stencil for an ellipsoidal domain.}
\label{fig:TimeComparison}
\end{figure}

\subsection{Adaptivity}

In our next example, we demonstrate the improvements possible with adaptivity using our generalised finite difference approximations.

\begin{example}\label{ex:convenv_adaptivity}
We consider the fully nonlinear convex envelope equation
\[
\begin{cases}
\max\left\{-\lambda_-(D^2u),u-g\right\} = 0,		& x \in \Omega\\
u = 0.5,										& x \in \partial \Omega,
\end{cases}
\]
where
\[
\lambda_-(D^2u) = \min_{\theta \in [0,2\pi]} \frac{\partial^2 u}{\partial e_\theta^2}.
\]

The equation is posed on an ellipse with semi-major axis equal to one and semi-minor axis equal to one-half, which is rotated through an angle of $\phi = \pi/6$. The obstacle $g$ consists of two cones,
\begin{align*}
g_1(x,y) = (x\cos\phi+y \sin\phi+0.5)^2 +(-x\sin\phi+y\cos\phi)^2\\
g_2(x,y) = (x\cos\phi+y \sin\phi-0.5)^2 +(-x\sin\phi+y\cos\phi)^2\\
g(x,y) = \min\left\{g_1(x,y), g_2(x,y), 0.5\right\}
\end{align*}
and the exact solution is
\[
u(x,y) = \begin{cases}
\min\left\{g_1(x,y),g_2(x,y)\right\},	& \abs{x\cos\phi+y\sin\phi} \geq 0.5\\
\abs{-x\sin\phi+y\cos\phi},				& \abs{x\cos\phi+y\sin\phi} < 0.5.
\end{cases}
\]
We note that this solution is only Lipschitz continuous, and the equation must be understood in a weak sense.
\end{example}

We defined the following refinement strategy. Given a solution a solution $u^h$,
\begin{enumerate}
	\item Compute numerically $\Norm{D^2u} = \sqrt{u_{xx}^2+2u_{xy}^2+u_{yy}^2}$.
	\item Refine twice every square with a vertex such that $h \Norm{D^2u} > 0.5$.
\end{enumerate}
This refinement strategy is inspired by the hybrid scheme proposed in \cite{ObermanCENumerics}.

A convergence plot is displayed in Figure~\ref{fig:ExampleAdaptivity}.  We observe an improvement in accuracy from roughly $\bO(N^{-0.38})$ to $\bO(N^{-0.64})$.  Moreover, the use of adaptivity leads to a clear qualitative improvement in the computed solutions.  This is evident in Figure~\ref{fig:ExampleAdaptivitySolutions}, which shows that the solution obtained with the uniform mesh is visibly non-convex along the singularity, which does not align with the grid.  By resolving this singularity, the adaptive method  produces a solution that has a dramatically better quality.

\begin{figure}[h]
\centering
\begin{tabular}{cc}
\includegraphics[width=\widthtwofigures]{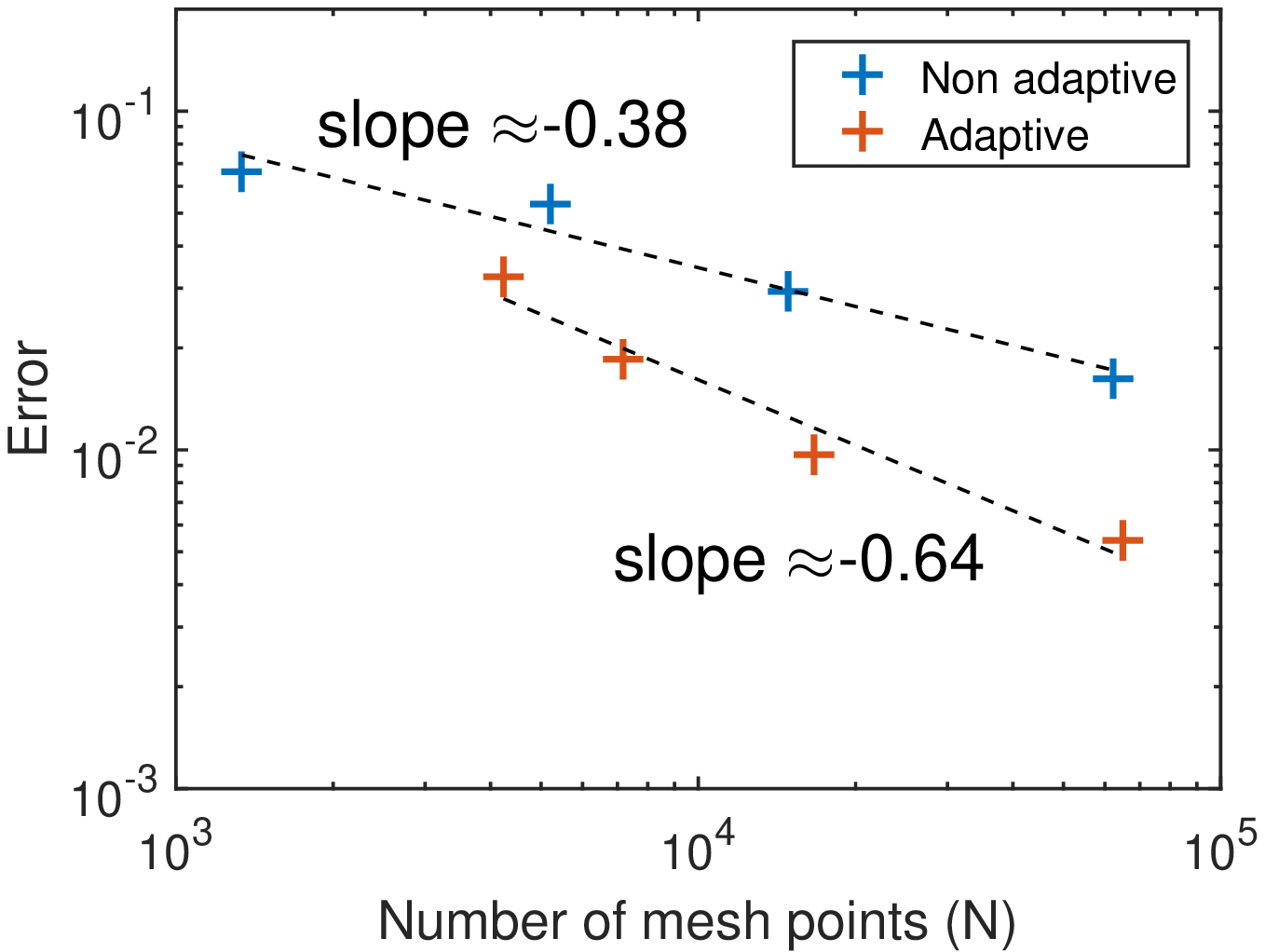}
\end{tabular}
\caption{Number of mesh points vs error for Example \ref{ex:convenv_adaptivity}.}
\label{fig:ExampleAdaptivity}
\end{figure}

\begin{figure}[h]
\centering
\begin{tabular}{cc}
\subfigure[]{\includegraphics[width=\widthtwofigures]{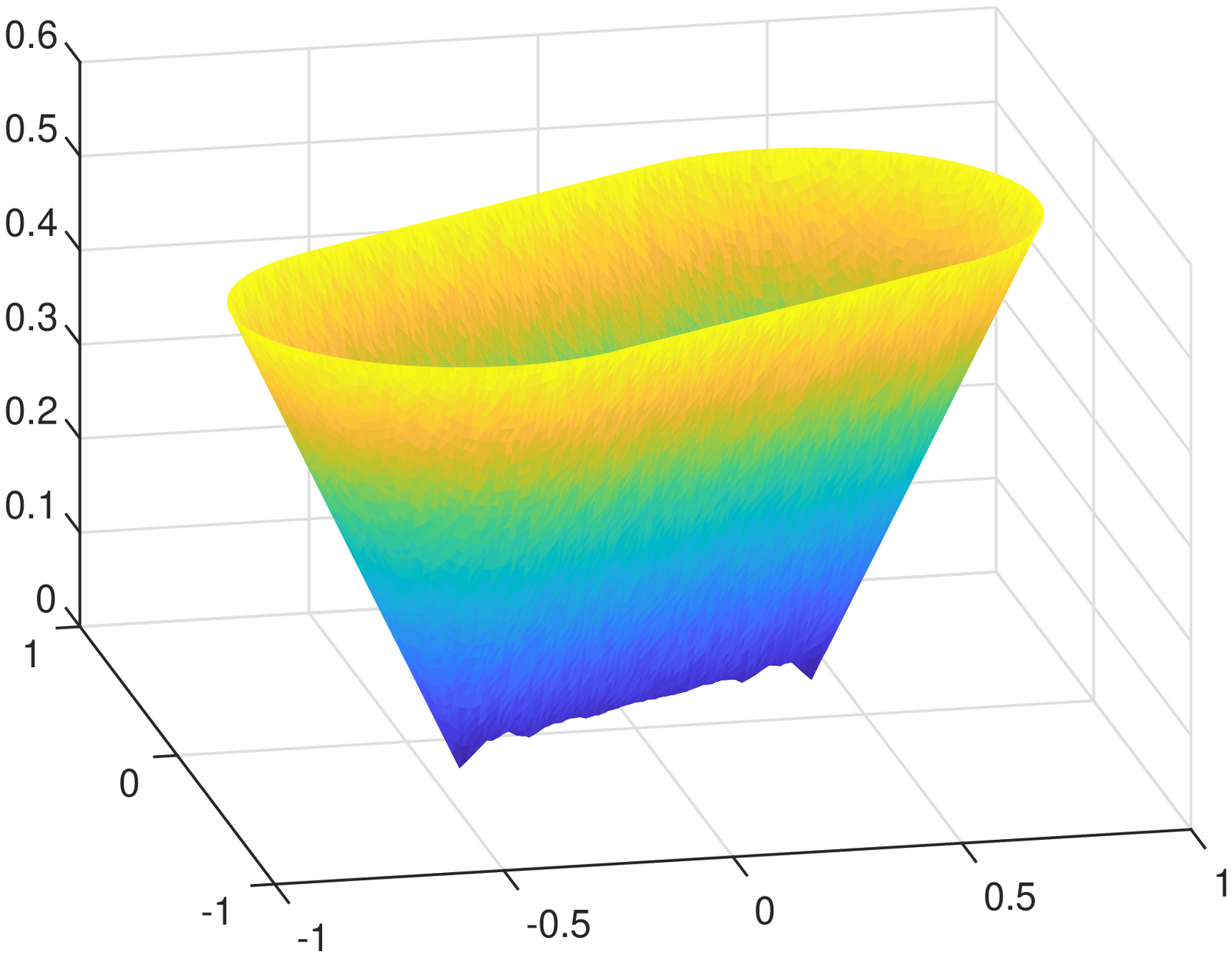}\label{fig:nonadaptive}}
\subfigure[]{\includegraphics[width=\widthtwofigures]{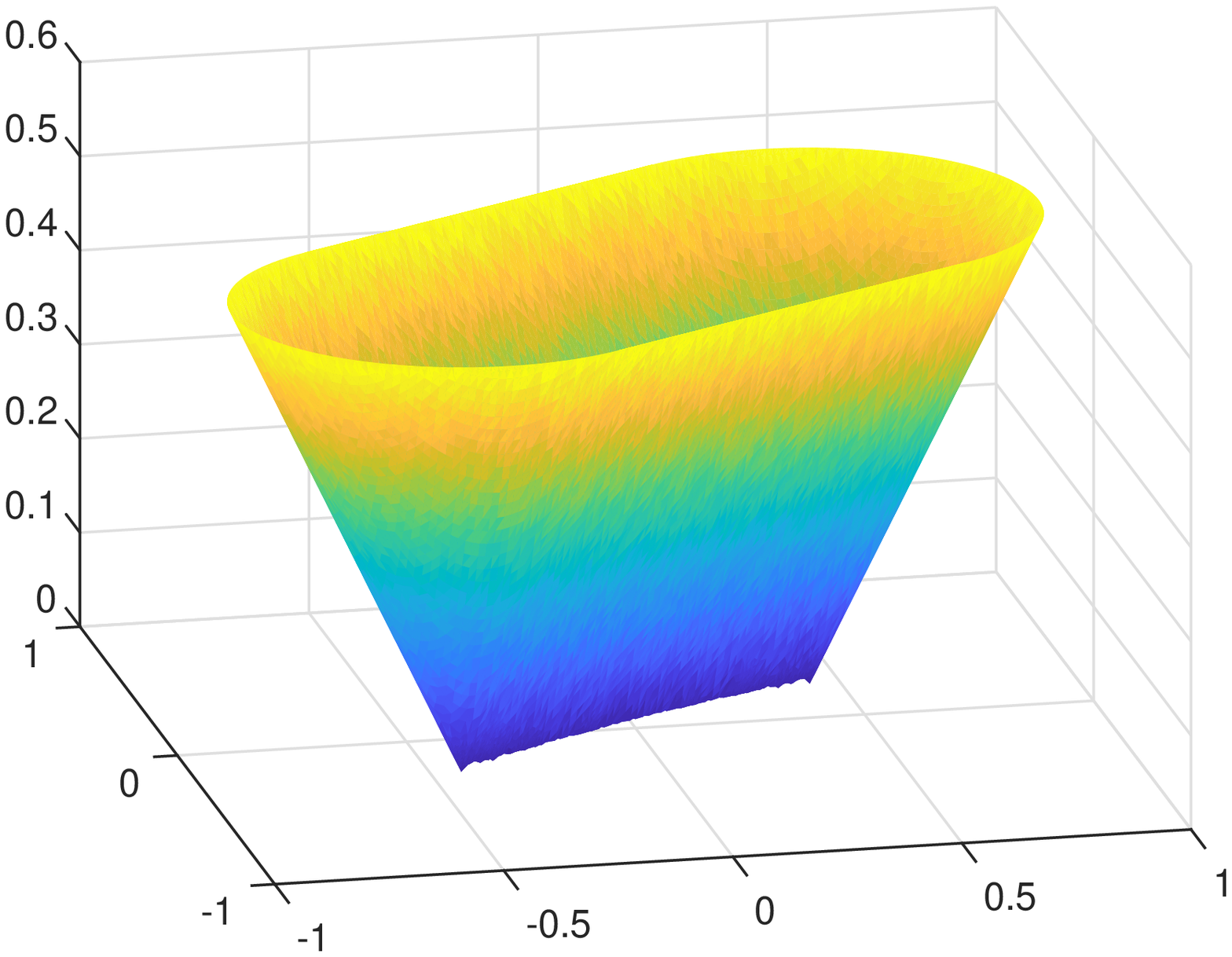}\label{fig:adaptive}}
\end{tabular}
\caption{Solutions of the convex envelope equation computed with the non-adaptive (left) and adaptive approaches (right).}
\label{fig:ExampleAdaptivitySolutions}
\end{figure}

\section{Conclusions}\label{sec:conclusions}
In this article, we described generalised finite difference methods for solving a large class of fully nonlinear elliptic partial differential equations.  These methods were inspired by the meshfree methods described in~\cite{FroeseMeshfree}, which are flexible and convergent, but very expensive to implement.

Our meshes used a modified quadtree structure that used piecewise Cartesian grids in the interior of the domain, augmented by a set of points lying exactly on the boundary.  The inclusion of the additional boundary points was needed to ensure convergence of the numerical methods.  This type of mesh also allows us to deal easily with complicated boundary geometries.

By relying on the underlying quadtree structure, we developed an algorithm for efficiently constructing the mesh and finite difference stencils.  This led to a dramatic improvement in computational efficiency as compared to the original brute force approach.

We also described a strategy for constructing higher-order approximations, which still fit within the convergence framework.  In the interior of the domain, the quadtree structure allows us to explicitly write out the higher-order finite difference schemes.  This strategy fails near the boundary, which can have a very complicated geometry.  At these points, we employed a least-squares approach to construct higher-order schemes.

We also used these methods to perform automatic mesh adaptation, which refines the mesh near singularities in the computed solutions.  This led to a dramatic qualitative and quantitative improvement in computed solutions.

\section*{Acknowledgments}
We thank Adam Oberman for helpful discussions and support of this project.

\bibliographystyle{amsalpha}
\bibliography{bibliography}

\providecommand{\bysame}{\leavevmode\hbox to3em{\hrulefill}\thinspace}
\providecommand{\MR}{\relax\ifhmode\unskip\space\fi MR }
\providecommand{\MRhref}[2]{%
  \href{http://www.ams.org/mathscinet-getitem?mr=#1}{#2}
}
\providecommand{\href}[2]{#2}
\begin{thebibliography}{dBCvKO08}

\bibitem[Awa15]{Awanou}
G.~Awanou, \emph{Standard finite elements for the numerical resolution of the
  elliptic {M}onge-{A}mp{\`e}re equation: classical solutions}, IMA Journal of
  Numerical Analysis \textbf{35} (2015), no.~3.

\bibitem[BFO10]{BFO_MA}
J.-D. Benamou, B.~D. Froese, and A.~M. Oberman, \emph{Two numerical methods for
  the elliptic {M}onge-{A}mp\`ere equation}, M2AN Math. Model. Numer. Anal.
  \textbf{44} (2010), no.~4, 737--758. \MR{2683581 (2011i:65183)}

\bibitem[BGNS11]{BrennerNeilanMA2D}
S.~C. Brenner, T.~Gudi, M.~Neilan, and L.-Y. Sung, \emph{${C}^0$ penalty
  methods for the fully nonlinear {M}onge-{A}mp\`ere equation}, Math. Comp.
  \textbf{80} (2011), no.~276, 1979--1995.

\bibitem[B{\"o}h08]{Bohmer}
K.~B{\"o}hmer, \emph{On finite element methods for fully nonlinear elliptic
  equations of second order}, SIAM Journal on Numerical Analysis \textbf{46}
  (2008), no.~3, 1212--1249.

\bibitem[BS91]{BSnum}
G.~Barles and P.~E. Souganidis, \emph{Convergence of approximation schemes for
  fully nonlinear second order equations}, Asymptotic Anal. \textbf{4} (1991),
  no.~3, 271--283. \MR{92d:35137}

\bibitem[BW09]{Budd}
C.~J. Budd and J.~F. Williams, \emph{Moving mesh generation using the parabolic
  {M}onge-{A}mp\`ere equation}, SIAM J. Sci. Comput. \textbf{31} (2009), no.~5,
  3438--3465. \MR{MR2538864}

\bibitem[CIL92]{CIL}
M.~G. Crandall, H.~Ishii, and P.-L. Lions, \emph{User's guide to viscosity
  solutions of second order partial differential equations}, Bull. Amer. Math.
  Soc. (N.S.) \textbf{27} (1992), no.~1, 1--67. \MR{92j:35050}

\bibitem[CM99]{Caf_MAGeom}
L.~A. Caffarelli and M.~Milman, \emph{Monge amp{\`e}re equation: Applications
  to geometry and optimization: Nsf-cbms conference on the monge amp{\`e}re
  equation, applications to geometry and optimization, july 9-13, 1997, florida
  atlantic university}, vol. 226, American Mathematical Soc., 1999.

\bibitem[CNP91]{Cullen}
M.~J.~P. Cullen, J.~Norbury, and R.~J. Purser, \emph{Generalised {L}agrangian
  solutions for atmospheric and oceanic flows}, SIAM J. Appl. Math. \textbf{51}
  (1991), no.~1, 20--31. \MR{1089128 (92g:76081)}

\bibitem[dBCvKO08]{bookquadtree}
M.~de~Berg, O.~Cheong, M.~van Kreveld, and M.~Overmars, \emph{Computational
  geometry}, third ed., Springer-Verlag, Berlin, 2008, Algorithms and
  applications. \MR{2723879}

\bibitem[DG06]{DGnum2006}
E.~J. Dean and R.~Glowinski, \emph{Numerical methods for fully nonlinear
  elliptic equations of the {M}onge-{A}mp\`ere type}, Comput. Methods Appl.
  Mech. Engrg. \textbf{195} (2006), no.~13-16, 1344--1386. \MR{MR2203972
  (2006i:65191)}

\bibitem[EF14]{EFWass}
B.~Engquist and B.~D. Froese, \emph{Application of the {W}asserstein metric to
  seismic signals}, Communications in Mathematical Sciences \textbf{12} (2014),
  no.~5.

\bibitem[Eva82]{Evans_NonlinearElliptic}
L.~C. Evans, \emph{Classical solutions of fully nonlinear, convex, second-order
  elliptic equations}, Communications on Pure and Applied Mathematics
  \textbf{35} (1982), no.~3, 333--363.

\bibitem[FDC08]{FinnGrid}
J.~M. Finn, G.~L. Delzanno, and L.~Chac{\'o}n, \emph{Grid generation and
  adaptation by {M}onge-{K}antorovich optimization in two and three
  dimensions}, Proceedings of the 17th International Meshing Roundtable, 2008,
  pp.~551--568.

\bibitem[FMMS02]{FrischUniv}
U.~Frisch, S.~Matarrese, R.~Mohayaee, and A.~Sobolevski, \emph{A reconstruction
  of the initial conditions of the universe by optimal mass transportation},
  Nature \textbf{417} (2002).

\bibitem[FN09]{FengNeilan}
X.~Feng and M.~Neilan, \emph{Vanishing moment method and moment solutions for
  fully nonlinear second order partial differential equations}, J. Sci. Comput.
  \textbf{38} (2009), no.~1, 74--98. \MR{MR2472219}

\bibitem[FO13]{FOFiltered}
B.~D. Froese and A.~M. Oberman, \emph{Convergent filtered schemes for the
  {M}onge-{A}mp\`ere partial differential equation}, SIAM Journal on Numerical
  Analysis \textbf{51} (2013), no.~1, 423--444.

\bibitem[Fro12]{FroeseTransport}
B.~D. Froese, \emph{A numerical method for the elliptic {M}onge-{A}mp\`ere
  equation with transport boundary conditions}, SIAM J. Sci. Comput.
  \textbf{34} (2012), no.~3, A1432--A1459.

\bibitem[Fro17a]{FroeseGauss}
\bysame, \emph{Convergence approximation of non-continuous surfaces of
  prescribed {G}aussian curvature}, Submitted,
  \url{https://arxiv.org/pdf/1601.06315.pdf}.

\bibitem[Fro17b]{FroeseMeshfree}
\bysame, \emph{{Meshfree finite difference approximations for functions of the
  eigenvalues of the {H}essian}}, Numerische Mathematik (2017), DOI:
  10.1007/s00211-017-0898-2.

\bibitem[FS06]{FlemingSoner}
W.~H. Fleming and H.~M. Soner, \emph{Controlled markov processes and viscosity
  solutions}, vol.~25, Springer Science \& Business Media, 2006.

\bibitem[GO03]{GlimmOlikerReflectorDesign}
T.~Glimm and V.~Oliker, \emph{Optical design of single reflector systems and
  the {M}onge-{K}antorovich mass transfer problem}, J. Math. Sci. (N. Y.)
  \textbf{117} (2003), no.~3, 4096--4108, Nonlinear problems and function
  theory. \MR{MR2027449 (2004k:49101)}

\bibitem[LR05]{Loeper}
G.~Loeper and F.~Rapetti, \emph{Numerical solution of the {M}onge-{A}mp\`ere
  equation by a {N}ewton's algorithm}, C. R. Math. Acad. Sci. Paris
  \textbf{340} (2005), no.~4, 319--324. \MR{2121899}

\bibitem[Obe06]{ObermanDiffSchemes}
A.~M. Oberman, \emph{Convergent difference schemes for degenerate elliptic and
  parabolic equations: {H}amilton-{J}acobi equations and free boundary
  problems}, SIAM J. Numer. Anal. \textbf{44} (2006), no.~2, 879--895
  (electronic). \MR{MR2218974 (2007a:65173)}

\bibitem[Obe08a]{ObermanWS}
\bysame, \emph{Wide stencil finite difference schemes for the elliptic
  {M}onge-{A}mp\`ere equation and functions of the eigenvalues of the
  {H}essian}, Discrete Contin. Dyn. Syst. Ser. B \textbf{10} (2008), no.~1,
  221--238. \MR{2399429 (2009f:35101)}

\bibitem[Obe08b]{ObermanCENumerics}
Adam~M. Oberman, \emph{Computing the convex envelope using a nonlinear partial
  differential equation}, Math. Models Methods Appl. Sci. \textbf{18} (2008),
  no.~5, 759--780. \MR{MR2413037 (2009d:35102)}

\bibitem[OP03]{Osher_book}
S.~Osher and N.~Paragios, \emph{Geometric level set methods in imaging, vision,
  and graphics}, Springer Science \& Business Media, 2003.

\bibitem[SAK15]{Saumier}
L.-P. Saumier, M.~Agueh, and B.~Khouider, \emph{An efficient numerical
  algorithm for the {L}2 optimal transport problem with periodic densities},
  IMA Journal of Applied Mathematics \textbf{80} (2015), no.~1, 135--157.

\bibitem[SS14]{Smears}
I.~Smears and E.~Suli, \emph{Discontinuous {G}alerkin finite element
  approximation of {H}amilton--{J}acobi--{B}ellman equations with {C}ordes
  coefficients}, SIAM Journal on Numerical Analysis \textbf{52} (2014), no.~2,
  993--1016.

\bibitem[SWR11]{SulmanWilliamsRussell}
M.~Sulman, J.~F. Williams, and R.~D. Russell, \emph{Optimal mass transport for
  higher dimensional adaptive grid generation}, J. Comput. Phys. \textbf{230}
  (2011), no.~9, 3302--3330. \MR{2780465}

\end{thebibliography}

\end{document}